\documentclass[a4paper,11pt,reqno]{amsart}
\usepackage[utf8]{inputenc}
\usepackage{amsfonts}
\usepackage{amsmath}
\usepackage{logicproof}
\usepackage{amssymb, amsthm}
\usepackage{tabularx}
\usepackage{booktabs}
\usepackage[labelfont=bf,format=plain,justification=raggedright,singlelinecheck=false]{caption}
\usepackage{mathrsfs}
\usepackage[]{mdframed}
\usepackage[colorlinks]{hyperref}
%%%%%%%%%%%%%%%%%%%%%%%%%%%%%%%%%%%%%%%%%%%%%
\usepackage{tcolorbox}
\usepackage{verbatim}
\tcbuselibrary{theorems}
\newtcbtheorem[number within=section]{mytheo}{Note}%
{colback=white!5,colframe=black!35!black,fonttitle=\bfseries}{th}
%%%%%%%%%%%%%%%%%%%%%%%%%%
\setlength{\textwidth}{15.2cm}
\setlength{\textheight}{22.7cm}
\setlength{\topmargin}{0mm}
\setlength{\oddsidemargin}{3mm}
\setlength{\evensidemargin}{3mm}
\setlength{\footskip}{1cm}
%%%%%%%%%%%%%%%%%%%%%%%%%%%

\numberwithin{equation}{section}
\usepackage{graphicx}
\usepackage{textgreek}

\newtheorem{theorem}{Theorem}[section]

\newtheorem{lemma}[theorem]{Lemma}

%\font\f=cmsy12

%\textwidth=13.5cm

%\include{book_loc.idx}

%\makeindex
%%%%%%%%%%%%%%%%%%%%%%%%%%%%%%%%%%%%%%%%%%%%%%%%%%%%%%%%%%%%%%%%%%%%%%%
%%%%%%%%%%%%%%%%%%%%%%%%%%%%%%%%%%%%%%%%%%%%%%%%%%%%%%%%%%%%%%
\begin{document}
	\title[Kernel Conjugate Gradient for Functional Linear Regression]{Convergence Analysis of Kernel Conjugate Gradient for Functional Linear Regression}
%%%%%%%%%%%%%%%%%%%%%%%%%%%%%%%%%%%%%%%%%%%%%%%%%%%%%
\author[N. Gupta]{Naveen Gupta}
\address[N. Gupta]{Indian Institute of Technology Delhi, India}
\email{ngupta.maths@gmail.com}
\author{S. Sivananthan}
\address[S. Sivananthan]{Indian Institute of Technology Delhi, India}
\email{siva@maths.iitd.ac.in}
\author[B. K. Sriperumbudur]{Bharath K. Sriperumbudur}
\address[B. K. Sriperumbudur]{Pennsylvania State University, USA}
\email{bks18@psu.edu}

%%%%%%%%%%%%%%%%%%%%%%%%%%%%%%%%%%%%%%%%%%%%%%%%%%%%%
\begin{abstract}
	In this paper, we discuss the convergence analysis of the conjugate gradient-based algorithm for the functional linear model in the reproducing kernel Hilbert space framework, utilizing early stopping results in regularization against over-fitting. We establish the convergence rates depending on the regularity condition of the slope function and the decay rate of the eigenvalues of the operator composition of covariance and kernel operator. Our convergence rates match the minimax rate available from the literature.
\end{abstract}
\subjclass{62R10, 62G20, 65F22} 
\keywords{Reproducing kernel Hilbert space, Conjugate gradient, Covariance operator, Functional linear regression}
\maketitle

%\keywords{Reproducing kernel Hilbert space, Conjugate gradient, Covariance, Functional linear regression}
 
 \section{Introduction}
\noindent
The functional linear regression (FLR) model is one of the fundamental tools for analyzing functional data, introduced by Ramsay and Dalzell \cite{ramsay1991some}. The model gained popularity due to its simplicity in dealing with high-dimensional functional data.
For example, it is widely used in medicine, chemometrics, and economics \cite{ramsay2005FDA,ferraty2006nonparametric,forni1998let,preda2005clusterwise}. Mathematically, the FLR model is stated as 
\begin{equation*}
\label{model}
    Y = \int_{S} X(t) \beta^*(t)\, dt + \epsilon,
\end{equation*}
where $Y$ is a real-valued random variable, $\left(X(t); t\in S \right)$ is a continuous time process, $\beta^*$ is an unknown slope function and $\epsilon$ is a zero mean random noise, independent of $X$, with finite variance $\sigma^2$. Throughout the paper, we assume that $X$ and $\beta^*$ are in $L^2\left(S\right)$, and $S$ is a compact subset of $\mathbb{R}^d$. 
In the context of the slope function, it is evident that
\begin{equation*}
\label{true solution}
    \beta^* := \arg\min_{\beta \in L^2\left(S\right)}\mathbb{E}\left[Y-\langle X,\beta \rangle \right]^2.
\end{equation*}
The goal is to construct an estimator $\hat{\beta}$ to approximate the slope function $\beta^*$ using observed empirical data 
$\left\{\left(X_1,Y_1\right),\left(X_2,Y_2\right),\cdots, \left(X_n,Y_n\right)\right\}$, where $X_i$'s are i.i.d.~copies of random function $X$.
The main approach in estimating the slope function $\beta^*$ is based on the representation of the estimator function and functional data in terms of certain basis functions. In this paper, we utilize the framework of reproducing kernel Hilbert space (RKHS) to construct an estimator function $\hat{\beta}$ using the conjugate gradient method.

In \cite{cardot2003spline}, the authors used penalized B-spline basis functions to represent the estimator function and also introduced an alternate smooth version of functional principal component analysis (FPCA) to construct $\hat{\beta}$. A Fourier basis approach was explored in \cite{li2007rates} and the FPCA-based approach is investigated in \cite{cai2006prediction,hall2007methodology,muller2005generalized}. One of the profound choices for the basis functions in the FPCA method is to use the eigenfunctions obtained from the covariance operator of the given data. However, Cai and Yuan~\cite{tonyyuan2012minimax} demonstrated that this choice may not be suitable for all cases, as shown with the example of Canadian weather data. This observation strongly motivated the researchers to explore alternative choices of basis functions. 

It is well-known in learning theory that kernel methods represent predictor functions using data-driven kernel functions, resulting in good generalization error (see \cite{smale2002foundation,sergeionregularization,cuckerzhou2007learningtheory,AI2022sergei}). 
Cai and Yuan \cite{ARKHSFORFLR} proposed utilizing the kernel method approach, wherein the estimator is expressed as a linear combination of kernel functions. The method achieves optimal rates under the assumption that the slope function $\beta^*$ belongs to the RKHS. Later in \cite{tonyyuan2012minimax}, they used the regularization technique to achieve optimal rates without the Sacks–Ylvisaker condition, which was a necessary assumption in \cite{ARKHSFORFLR}. Further analysis of the FLR model within the framework of RKHS has been studied and discussed in \cite{balasubramanian2022unified,ZhangFaster2020,HTONGFLR,TONGHUBER}. Since the computational complexity of these techniques is $O(n^3)$, they incur high computational costs when dealing with large datasets.

To address this shortcoming, Blanchard and Kr\"amer\cite{BlanchardCGKCG2016} employed the conjugate gradient method in the kernel ridge regression method, by utilizing an early stopping rule that also serves as a form of regularization. This reduces the computational complexity to $O\left(n^2m\right)$ for $m$ number of iterations \cite{KCGMwithrandomprojection}. Inspired by their work, in this paper, we propose an estimator $\hat{\beta}$ for the FLR model by employing the conjugate gradient (CG) approach with an early stopping rule. We specifically focus on the CG method due to its outstanding computational characteristics, setting it apart from other approaches. Since it aggressively targets the reduction of residual errors, it is commonly observed in practical applications that the CG method achieves convergence in significantly fewer iterations compared to other gradient descent techniques, as discussed in the context of kernel learning by \cite{guo2023capacity} and \cite{onlineGD2022}. We obtain a convergence rate for $\Vert \hat{\beta}-\beta^*\Vert_{L^2(S)}$ and show it to align with the minimax rates of the FLR model \cite{tonyyuan2012minimax,ZhangFaster2020}, thereby establishing the minimax optimality of our estimator.

The paper is organized as follows.
In Section~\ref{preliminaries}, we present the necessary background for the conjugate gradient method for the FLR model in the RKHS setting and explain some important properties of certain orthogonal polynomials. In Section~\ref{mainresult}, we discuss our assumptions and provide convergence rates of the CG method. We present the supplementary results, which will be used to prove the main theorem, in the final section, Section \ref{supplements}.
\section{Preliminaries and Notations}
\label{preliminaries}
\noindent Let $\mathcal{H}$ be  a Hilbert space of real-valued functions on a compact subset $S$ of $\mathbb{R}^d$.
We say that $\mathcal{H}$ is  RKHS if for every $x\in S$, the pointwise evaluation map $f \mapsto f(x)  $ is continuous on $\mathcal{H}$. As a consequence of the Riesz representation theorem, there is a unique kernel function $k:S\times S\rightarrow\mathbb{R}$, called the \emph{reproducing kernel} such that $k\left(s,\cdot\right) \in \mathcal{H}$ for any $s \in S$ satisfies the reproducing property:
\begin{equation*}
f\left(s\right) = \left\langle k(s,\cdot) ,f \right\rangle_{\mathcal{H}}, \quad \forall f \in \mathcal{H}.
\end{equation*}
It is easy to see that the associated kernel function $k$ is symmetric and positive definite. Conversely, for a given symmetric and positive definite function $k$, we can construct a unique RKHS with $k$ as the reproducing kernel.  For a detailed study of RKHS, we
refer the reader to \cite{aronszajn1950rkhs}.

%\noindent

We assume that $k$ is continuous;  
 then the associated RKHS $\mathcal{H}$ is separable and the embedding operator (inclusion operator) $J: \mathcal{H} \to L^2\left(S\right)$, which is defined as $(Jf)(x) = \left\langle k(x,\cdot) ,f \right\rangle_{\mathcal{H}}$ is compact. The adjoint operator $J^*:L^2\left(S\right) \to \mathcal{H}$ is given by 
$$(J^*g)(x)= \int_S k(x,t)g(t)\, dt.$$  We denote the integral operator, $T:= JJ^{*}: L^2\left(S\right) \to L^2\left(S\right)$ and
the covariance operator $C:= \mathbb{E}\left[X \otimes X\right]: L^2\left(S\right) \to L^2\left(S\right)$, 
where $\otimes$ is the $L^2\left(S\right)$ tensor product.

Given $(X_i,Y_i)_{i=1}^n$ i.i.d. copies of $\left(X,Y\right)$, our estimator is defined as
\begin{equation*}
    \begin{split}
        \hat{\beta} & = \arg\min_{\beta \in \mathcal{H}}\frac{1}{n}\sum_{i=1}^n\left[Y_i-\langle \beta, X_i \rangle_{L^2} \right]^2\\
        & = \arg\min_{\beta \in \mathcal{H}}\frac{1}{n}\sum_{i=1}^n\left[Y_i-\langle J\beta, X_i \rangle_{L^2} \right]^2\\
        & = \arg\min_{\beta \in \mathcal{H}}\frac{1}{n}\sum_{i=1}^n\left[Y_i-\langle \beta, J^{*}X_i \rangle_\mathcal{H} \right]^2.\\
    \end{split}
\end{equation*}
A solution for this optimization problem can be obtained by solving 
\begin{equation}
\label{estimator}
J^{*}\hat{C}_{n}J\hat{\beta} = J^{*}\hat{R},
\end{equation}
where 
\begin{equation*}
    \hat{C}_{n}:= \frac{1}{n}\sum_{i=1}^n X_i \otimes X_i \quad \text{and} \quad \hat{R}:= \frac{1}{n}\sum_{i=1}^n Y_i X_i.
\end{equation*}
We denote $\Lambda:= T^{\frac{1}{2}}CT^{\frac{1}{2}}$ and $\Lambda_{n}:= T^{\frac{1}{2}}\hat{C}_{n}T^{\frac{1}{2}}$. 

The fundamental idea behind the conjugate gradient method is to restrict the optimization problem to a set of subspaces (data dependent), known as Krylov subspaces, defined as
\begin{equation*}
\begin{split}
    \mathcal{K}_{m}\left(J^{*}\hat{R}, J^{*} \hat{C}_{n}J\right) : &= \text{span}\left\{J^{*}\hat{R}, J^{*}\hat{C}_{n}J J^{*}\hat{R}, \left(J^{*}\hat{C}_{n}J\right)^2J^{*}\hat{R},\ldots,  \left(J^{*}\hat{C}_{n}J\right)^{m-1}J^{*}\hat{R} \right\}\\
    & = \left\{p\left(J^{*} \hat{C}_{n}J\right)J^{*}\hat{R} \quad : \quad p \in \mathcal{P}_{m-1}\right\},
\end{split}
\end{equation*}
where $\mathcal{P}_{m-1}$ is a set of real polynomials of degree at most $m-1$. Then the CG solution after $m$ iterations is 
\begin{equation}
\label{kryestimator}
    \hat{\beta}_{m} = \arg\min_{\beta \in \mathcal{K}_{m}\left(J^{*}\hat{R}, J^{*} \hat{C}_{n}J\right)}\left\|J^{*}\hat{R}- J^{*} \hat{C}_{n}J \beta\right\|_\mathcal{H}.
\end{equation}
The iterated solution, because the problem is restricted to the Krylov subspaces, will take the form $\hat{\beta}_{m} = q_{m}\left(J^{*}\hat{C}_{n}J\right)J^{*}\hat{R}$ with $q_{m}$ being a polynomial of degree at most $m-1$. Associated with each iterated polynomial $q_m$, we have a residual polynomial defined as $p_m\left(x\right) = 1-xq_m\left(x\right) \in \mathcal{P}_{m}^0$, where $\mathcal{P}_{m}^0$ is a set of real polynomials of degree at most $m$ and having constant term equal to 1. 
\noindent
Since the construction of the estimator involves forward multiplication through the residual polynomial $p_m$, it is essential to understand certain fundamental characteristics of these polynomials.  

Suppose $\left(\xi_{n,i},e_{n,i}\right)_{i\in I}$  is an eigenvalue-eigenfunction pair for the operator $\hat{\Lambda}_{n}$ with $\xi_{n,i}$ in $ \left[0,\kappa \right]$, $i\in I$, for some $\kappa >0$  and $\left\{e_{n,i} : i\in I\right\}$ is an orthonormal system in $L^2\left(S\right)$.
For $u>0$, denote $F_{u}= 1_{\left[0,u\right)}\left(\hat{\Lambda}_{n}\right)$ as the orthogonal projector onto the space spanned by $\left\{e_{n,i} : i\in I,~  \xi_{n,i} < u\right\}$.
For each integer $l \geq 0$, we will introduce measure $\mu_{n}^{\left(l\right)}$ which is defined as
\begin{equation*}
    \mu_{n}^{\left(l\right)}:= \sum_{i \in I} \xi_{n,i}^l \left\langle T^{\frac{1}{2}}\hat{R},e_{n,i}\right\rangle_{L^2}^2 \delta_{\xi_{n,i}},
\end{equation*}
where $\delta_{x}$ is Dirac measure centered at $x$. In particular, for $l=0$ we use the convention $0^0=1$.
Associated to each measure $\mu_{n}^{\left(l\right)}$, $l\geq 0$, we define the scalar product of two polynomials as %is defined by %$\left[p,q\right]_{\left(l\right)}$ 
\begin{equation*}
    \begin{split}
        \left[p,q\right]_{\left(l\right)}: = & \int_{0}^{\kappa}p\left(t\right)q\left(t\right)d\mu_{n}^{\left(l\right)}\left(t \right) \\
         = & \left\langle p\left(\hat{\Lambda}_{n}\right)T^{\frac{1}{2}}\hat{R}, \left(\hat{\Lambda}_{n}\right)^{l} q\left(\hat{\Lambda}_{n}\right)T^{\frac{1}{2}} \hat{R} \right\rangle_{L^2}  \\
         = & \sum_{i \in I } p\left(\xi_{n,i}\right)  q\left(\xi_{n,i}\right) \left(\xi_{n,i}\right)^l \left\langle T^{\frac{1}{2}}\hat{R},e_{n,i}\right\rangle_{L^2}^2.
    \end{split}
\end{equation*}
For $l=0$, we see that
\begin{equation*}
    \left[p,q\right]_{\left(0\right)}= \left\langle p\left(\hat{\Lambda}_{n}\right)T^{\frac{1}{2}}\hat{R}, q\left(\hat{\Lambda}_{n}\right)T^{\frac{1}{2}} \hat{R} \right\rangle_{L^2}.
\end{equation*}
Since  $\hat{\Lambda}_{n}$ is a finite rank operator, it has only a finite number of non-zero eigenvalues. Consequently, we observe that the measure $\mu_n^{(l)}$ has finite support of cardinality, independent of $l$. Indeed, if
 $\xi_{n,j}=0$ for some $j \in I$, then we have
\begin{equation*}
    \begin{split}
    T^{\frac{1}{2}}\hat{C}_{n}T^{\frac{1}{2}} e_{n,j}=0 \implies
    & \left \langle T^{\frac{1}{2}}\hat{C}_{n}T^{\frac{1}{2}} e_{n,j} , e_{n,j} \right \rangle_{L^2} = 0 \\
    \implies & \frac{1}{n}\sum_{i=1}^n \left \langle T^{\frac{1}{2}} X_{i}, e_{n,j} \right \rangle_{L^2}^2 = 0 \\
    \implies & \left \langle T^{\frac{1}{2}} X_{i}, e_{n,j} \right \rangle_{L^2} = 0, \quad \forall\,1\leq i \leq n.
    \end{split}
\end{equation*}
Now we see that
\begin{equation*}
    \begin{split}
        \left \langle T^{\frac{1}{2}} \hat{R}, e_{n,j} \right \rangle_{L^2}  & = \frac{1}{n}\sum_{i=1}^n Y_{i} \left \langle T^{\frac{1}{2}} X_{i}, e_{n,j} \right \rangle_{L^2}= 0.
    \end{split}
\end{equation*}
This concludes that $\mu_{n}^{\left(l\right)}$ has finite support of cardinality (independent of $l$), let's say $n_{\gamma} \leq n$. It is clear from $\left(\ref{kryestimator}\right)$ that $q_{m}$ minimizes 
\begin{equation*}
\left\|J^{*}\hat{R}-J^{*}\hat{C}_{n}Jq\left(J^{*}\hat{C}_{n}J\right)J^{*}\hat{R}\right\|_{\mathcal{H}}= \left\|T^{\frac{1}{2}}\hat{R}-T^{\frac{1}{2}}\hat{C}_{n}T^{\frac{1}{2}}q\left(T^{\frac{1}{2}}\hat{C}_{n}T^{\frac{1}{2}}\right)T^{\frac{1}{2}}\hat{R}\right\|_{L^2}
\end{equation*}
over $q \in \mathcal{P}_{m-1}$. Equivalently, consider $p\left(x\right)=1-xq\left(x\right)$, then  $p_{m}$  minimizes $$\left\|p\left(T^{\frac{1}{2}}\hat{C}_{n}T^{\frac{1}{2}}\right)T^{\frac{1}{2}}\hat{R}\right\|_{L^2}= \left[p,p\right]_{\left(0\right)}$$ over $p \in \mathcal{P}_{m}^0$. In other words, we can say that $p_{m}$ is the orthogonal projection of origin onto the affine subspace $\mathcal{P}_{m}^{0} \subset \mathcal{P}_{m}$ for the scalar product $\left[\cdot,\cdot\right]_{\left(0\right)}$. We take $q_{0}=0, p_{0}=1$ for $m=0$. Because of the properties of projections, $p_{m}$ is orthogonal to $\mathcal{P}_{m}^{0} $. Since $\mathcal{P}_{m}^{0} = 1+ \pi \mathcal{P}_{m-1}$ is parallel to $\pi \mathcal{P}_{m-1}$, where $\left(\pi(q)\right)(x) = xq(x)$ is a shift operator. So we have  $0=\left[p_{m},\pi q\right]_{\left(0\right)}=\left[p_{m},q\right]_{\left(1\right)}$ for any $q \in \mathcal{P}_{m-1}$ which concludes that $p_{0},\ldots,p_{n_{\gamma-1}}$ is an orthogonal sequence of polynomials with respect to $\left[\cdot,\cdot\right]_{\left(1\right)}$. For $m=n_{\gamma}$, we can see that $\left[\cdot,\cdot \right]_{\left(l\right)}$ is semidefinite product on $\mathcal{P}_{n_{\gamma}}$ and $p_{n_{\gamma}}$ is unique element of $\mathcal{P}_{n_{\gamma}}^0$ satisfying
$\left[p_{n_{\gamma}},p_{n_{\gamma}} \right]_{\left(0\right)}=0 $. Hence, for $m=n_{\gamma}$, unicity of the solution holds and $\left[p_{n_{\gamma}},p_{m}\right]_{\left(1\right)}=0$ for all $m \leq n_{\gamma}$. By applying the representer theorem to $\left(\ref{estimator}\right)$,
\begin{equation*}
    \hat{\beta} \in \text{span}\left\{\int_{S}k(\cdot,t)X_{i}(t)\,dt : i=1,\ldots,n\right\},
\end{equation*}
i,e., there exists a $\mathbf{\alpha}:=\left(\alpha_{1},\ldots,\alpha_{n}\right)^{\top} \in \mathbb{R}^{n}$ such that $\hat{\beta}=\sum_{i=1}^{n}\alpha_{i}\int_{S}k\left(\cdot,t\right)X_{i}\left(t\right)dt$. Using this in $\left(\ref{estimator}\right)$, we can solve $\mathbf{K \alpha}=\mathbf{y}$ to get a solution of $\left(\ref{estimator}\right)$, where
\begin{equation*}
    \mathbf{K} \in \mathbb{R}^{n \times n} \text{ with } \left[\mathbf{K}\right]_{ij:}= \int_{S}\int_{S}k(s,t)X_{i}(t)X_{i}(s)\,dt\,ds
\end{equation*} 
and $\mathbf{y}= \left(Y_{1},\ldots,Y_{n}\right)^{\top}\in \mathbb{R}^{n}$. We refer the reader to  \cite{Hanke1995CGtypemethod} for the iterative methodology to create the CG estimator of $\hat{\beta}$ in $\left(\ref{estimator}\right)$.

The following lemma, which lists several properties of orthogonal polynomials, is proven in \cite{BlanchardCGKCG2016}. It will be used frequently throughout the remainder of the paper.
\begin{lemma}\cite{BlanchardCGKCG2016}
\label{polynomial lemma}
    Let $m$ be any integer satisfying $1\leq m \leq n_{\gamma}$.
    \begin{enumerate}
        \item The polynomial $p_{m}$ has exactly $m$ distinct roots belonging to $\left(0,\kappa \right]$, denoted by $\left(x_{k,m}\right)_{1\leq k \leq m}$ in increasing order.
        \item $p_m$ is positive, decreasing and convex on the interval $\left[0,x_{1,m}\right)$.
        \item Define the function $\phi_{m}$ on the interval $\left[0,x_{1,m}\right)$ as
        \begin{equation*}
            \phi_{m}\left(x\right) = p_{m}\left(x\right)\left(\frac{x_{1,m}}{x_{1,m}-x}\right)^{\frac{1}{2}}.
        \end{equation*}
        Then it holds 
        \begin{equation*}
        \begin{split}    \left[p_m,p_m\right]_{\left(0\right)}^{\frac{1}{2}} & = \left\|p_{m}\left(\hat{\Lambda}_{n}\right)T^{\frac{1}{2}}\hat{R}\right\|_{L^2}
             \leq \left\|F_{x_{1,m}}\phi_{m}\left(\hat{\Lambda}_{n}\right)T^{\frac{1}{2}}\hat{R}\right\|_{L^2},
        \end{split}
        \end{equation*}
        and furthermore, for any $\nu \geq 0$, %with the convention $0^0=1$
        \begin{equation}
        \label{xphi bound}
            \sup_{x \in \left[0,x_{1,m}\right]}x^{\nu}\phi_{m}^2\left(x\right) \leq \nu^{\nu} \left|p_{m}^{'}\left(0\right)\right|^{-\nu}.
        \end{equation}
        \item Denote $p_{0}^{\left(2\right)},p_{1}^{\left(2\right)},\ldots,p_{n_{\gamma-1}}^{\left(2\right)}$ the unique sequence of orthogonal polynomial with respect to $\left[\cdot,\cdot\right]_2$ and with constant term equal to 1. 
        This sequence enjoys properties $\left(1\right)$ and $\left(2\right)$ above with $\left(x_{k,m}^{\left(2\right)}\right)_{1\leq k\leq m}$ denoting the distinct roots of $p_{m}^{\left(2\right)}$ in increasing order. Then it holds that $x_{1,m} \leq x_{1,m}^{\left(2\right)}$. 
        Finally, the following holds:
        \begin{equation*}
            0 \leq p_{m-1}^{'}\left(0\right) - p_{m}^{'}\left(0\right) = \frac{\left[p_{m-1},p_{m-1}\right]_{\left(0\right)}-\left[p_{m},p_{m-1}\right]_{\left(0\right)}}{\left[p_{m-1}^{\left(2\right)},p_{m-1}^{\left(2\right)}\right]_{\left(1\right)}} \leq \frac{\left[p_{m-1},p_{m-1}\right]_{\left(0\right)}}{\left[p_{m-1}^{\left(2\right)},p_{m-1}^{\left(2\right)}\right]_{\left(1\right)}}.
        \end{equation*}
    \end{enumerate}
\end{lemma}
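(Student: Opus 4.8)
The plan is to handle the four items separately, using throughout the facts established in the preamble: $(p_m)_{0\le m\le n_{\gamma}-1}$ is the sequence of constant-term-$1$ polynomials orthogonal for the positive, finitely supported scalar product $[\cdot,\cdot]_{(1)}$, and $p_m$ is the $[\cdot,\cdot]_{(0)}$-minimiser over $\mathcal{P}_m^0$, with $p_m$ orthogonal (in $[\cdot,\cdot]_{(0)}$) to $\pi\mathcal{P}_{m-1}$. For (1)--(2), since $p_m$ is a degree-$m$ orthogonal polynomial for the positive measure $\mu_n^{(1)}$, whose support lies in $(0,\kappa]$ (the zero eigenvalue carries no mass, as the preamble shows), classical orthogonal-polynomial theory gives exactly $m$ simple zeros in the interior of the convex hull of the support, hence in $(0,\kappa]$; the hypothesis $m\le n_{\gamma}$ guarantees enough support points. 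Writing $p_m(x)=\prod_{k=1}^m(1-x/x_{k,m})$ (valid as $p_m(0)=1$), every factor is positive on $[0,x_{1,m})$, so $p_m>0$ there; then $p_m'/p_m=-\sum_k(x_{k,m}-x)^{-1}<0$ gives the decrease, and $p_m''=\big((\log p_m)''+(p_m'/p_m)^2\big)p_m$ together with the elementary bound $\big(\sum_k(x_{k,m}-x)^{-1}\big)^2\ge\sum_k(x_{k,m}-x)^{-2}=-(\log p_m)''$ yields $p_m''\ge0$, i.e.\ convexity.

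For the norm inequality in (3) the key observation is that $x_{1,m}$ being a root lets me write $p_m(x)=(1-x/x_{1,m})\,s(x)$ with $s\in\mathcal{P}_{m-1}^0$, and a direct computation then gives $\phi_m^2=p_m\,s$. Next I would use orthogonality of $p_m$ to $\pi\mathcal{P}_{m-1}$ in $[\cdot,\cdot]_{(0)}$: since $s-1\in\pi\mathcal{P}_{m-2}$ and $1-p_m\in\pi\mathcal{P}_{m-1}$, this forces $[p_m,s]_{(0)}=[p_m,1]_{(0)}=[p_m,p_m]_{(0)}$. Writing $w_i=\langle T^{\frac12}\hat R,e_{n,i}\rangle_{L^2}^2$ and splitting $[p_m,s]_{(0)}=\sum_i p_m(\xi_{n,i})s(\xi_{n,i})w_i$ at $x_{1,m}$, the terms with $\xi_{n,i}\ge x_{1,m}$ equal $p_m(\xi_{n,i})^2/(1-\xi_{n,i}/x_{1,m})\le 0$, so discarding them only increases the sum, while the remaining terms equal $\|F_{x_{1,m}}\phi_m(\hat{\Lambda}_{n})T^{\frac12}\hat R\|_{L^2}^2$ because $\phi_m^2=p_m s$. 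This gives $[p_m,p_m]_{(0)}\le\|F_{x_{1,m}}\phi_m(\hat{\Lambda}_{n})T^{\frac12}\hat R\|_{L^2}^2$, as claimed.

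For the supremum bound in (3) I would differentiate the logarithm of $\phi_m^2$: from $\frac{d}{dx}\log\phi_m^2=-(x_{1,m}-x)^{-1}-2\sum_{k\ge2}(x_{k,m}-x)^{-1}$ and $(x_{k,m}-x)^{-1}\ge x_{k,m}^{-1}$ on $[0,x_{1,m})$ one obtains $\frac{d}{dx}\log\phi_m^2\le-\sum_k x_{k,m}^{-1}=-|p_m'(0)|$. Since $\phi_m^2(0)=1$, integrating gives $\phi_m^2(x)\le e^{-|p_m'(0)|x}$, and maximising $x^{\nu}e^{-|p_m'(0)|x}$ over $x\ge0$ yields $(\nu/|p_m'(0)|)^{\nu}e^{-\nu}\le\nu^{\nu}|p_m'(0)|^{-\nu}$, which is the stated estimate \eqref{xphi bound}.

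For (4), properties (1)--(2) for $p_m^{(2)}$ follow verbatim from the first paragraph, as $p_m^{(2)}$ are the constant-term-$1$ orthogonal polynomials for the positive measure $\mu_n^{(2)}=x\,\mu_n^{(1)}$; the inequality $x_{1,m}\le x_{1,m}^{(2)}$ I would get from the classical monotonicity of zeros under multiplication of the measure by the positive factor $x$ (the Christoffel transform pushes zeros to the right). For the derivative identity the clean route is variational: parametrise $\mathcal{P}_m^0\ni p=1+xq$, $q\in\mathcal{P}_{m-1}$ (so $p'(0)=q(0)$), expand $[p,p]_{(0)}=[1,1]_{(0)}+2[1,q]_{(1)}+[q,q]_{(2)}$, and read off the normal equations $[q_{m-1},v]_{(2)}=-[1,v]_{(1)}$ for all $v\in\mathcal{P}_{m-1}$, where $p_m=1+xq_{m-1}$. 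Subtracting the equations for consecutive degrees shows $\delta:=q_{m-1}-q_{m-2}$ is $[\cdot,\cdot]_{(2)}$-orthogonal to $\mathcal{P}_{m-2}$, hence $\delta=c\,p_{m-1}^{(2)}$, so that $p_{m-1}-p_m=-x\delta=-c\,x\,p_{m-1}^{(2)}$ and $p_{m-1}'(0)-p_m'(0)=-c$. I expect the main bookkeeping obstacle to be recasting $c$ into the stated quotient: testing the normal equation against $v=p_{m-1}^{(2)}$ (with $[q_{m-2},p_{m-1}^{(2)}]_{(2)}=0$) identifies $c$, and the two identities $[p_{m-1}-p_m,p_{m-1}]_{(0)}=[(p_{m-1}-p_m)/x,p_{m-1}]_{(1)}$ and $[p_{m-1}^{(2)},p_{m-1}]_{(1)}=[p_{m-1}^{(2)},p_{m-1}^{(2)}]_{(1)}$ (the latter since $p_{m-1}-p_{m-1}^{(2)}\in\pi\mathcal{P}_{m-2}$ and $p_{m-1}^{(2)}\perp_{(2)}\mathcal{P}_{m-2}$) turn it into the displayed numerator over $[p_{m-1}^{(2)},p_{m-1}^{(2)}]_{(1)}$. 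Finally, nonnegativity follows from $[p_m,p_{m-1}]_{(0)}=[p_m,p_m]_{(0)}$ (orthogonality to $\pi\mathcal{P}_{m-1}$) and the minimising property $[p_{m-1},p_{m-1}]_{(0)}\ge[p_m,p_m]_{(0)}$ coming from $\mathcal{P}_{m-1}^0\subset\mathcal{P}_m^0$, while the same identity $[p_m,p_{m-1}]_{(0)}=[p_m,p_m]_{(0)}\ge0$ gives the upper bound by dropping that term from the numerator.
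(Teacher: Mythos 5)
The paper does not actually prove this lemma: it is imported wholesale from Blanchard and Kr\"amer \cite{BlanchardCGKCG2016} (``is proven in [BK16]''), so there is no in-paper argument to compare against, and your attempt has to be judged as a free-standing reconstruction of the cited proof. Judged that way, it is correct, and it runs along the same circle of ideas as the original: the root factorization $p_m(x)=\prod_{k=1}^m(1-x/x_{k,m})$ for parts (1)--(2), and for part (3) the factorization $p_m=(1-x/x_{1,m})s$ with $s\in\mathcal{P}_{m-1}^0$, giving $\phi_m^2=p_m s$. The key steps check out: $[p_m,s]_{(0)}=[p_m,1]_{(0)}=[p_m,p_m]_{(0)}$ follows exactly as you say from $p_m\perp_{(0)}\pi\mathcal{P}_{m-1}$; the atoms with $\xi_{n,i}\ge x_{1,m}$ contribute $p_m^2(\xi_{n,i})/(1-\xi_{n,i}/x_{1,m})\le 0$ and may be discarded; and the log-derivative bound $\frac{d}{dx}\log\phi_m^2(x)=-(x_{1,m}-x)^{-1}-2\sum_{k\ge 2}(x_{k,m}-x)^{-1}\le-\sum_k x_{k,m}^{-1}=-\left|p_m'(0)\right|$, combined with $\phi_m^2(0)=1$, yields $\phi_m^2(x)\le e^{-\left|p_m'(0)\right|x}$ and hence \eqref{xphi bound} after maximizing $x^{\nu}e^{-cx}$ (note $\phi_m^2=p_m s$ extends continuously with value $0$ at $x_{1,m}$, so taking the supremum over the closed interval is harmless). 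In (4) your variational route is sound and even slightly overbuilt: once you have $p_{m-1}-p_m=-c\,x\,p_{m-1}^{(2)}$ with $-c=p_{m-1}'(0)-p_m'(0)$, the numerator is $[p_{m-1}-p_m,p_{m-1}]_{(0)}=-c\,[p_{m-1}^{(2)},p_{m-1}]_{(1)}=-c\,[p_{m-1}^{(2)},p_{m-1}^{(2)}]_{(1)}$ directly (the last equality by $p_{m-1}-p_{m-1}^{(2)}\in\pi\mathcal{P}_{m-2}$), so testing the normal equations against $v=p_{m-1}^{(2)}$ is unnecessary; the sign bounds via $[p_m,p_{m-1}]_{(0)}=[p_m,p_m]_{(0)}$ and minimality are exactly right. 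Two steps are outsourced to classical orthogonal-polynomial theory rather than proved: the simplicity and location of the zeros in part (1) (fine for $m<n_{\gamma}$, but at the boundary case $m=n_{\gamma}$ the generic theorem needs strictly more support points than the degree, so you should argue separately that $p_{n_{\gamma}}$ vanishes at all $n_{\gamma}$ atoms of $\mu_n^{(1)}$, which also gives $m$ distinct roots in $\left(0,\kappa\right]$), and Markov's monotonicity of zeros under multiplication of the measure by the increasing factor $x$, which yields $x_{1,m}\le x_{1,m}^{(2)}$. Both are genuinely classical, so the appeals are legitimate, but the latter is the one assertion you state with no argument at all; if you want the reconstruction self-contained, that is the step to expand (e.g.\ via Christoffel's formula relating the two families).
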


\section{The Main Result}
\label{mainresult}
\noindent
In this section, we present the convergence rate of the conjugate gradient method in functional linear regression. Our proofs are inspired by the ideas of  \cite{BlanchardCGKCG2016}. The analysis depends on the eigenvalue behaviour of the operator $\Lambda = T^{\frac{1}{2}}CT^{\frac{1}{2}}$ which indicates the behaviour of eigenvalues of the kernel operator $T$ and the covariance operator $C$.
First, we begin with a list of assumptions that are required for our convergence rate analysis.

\noindent \textbf{Assumption 1.} $\left( \textit{Source Condition}\right)$ There exists $g \in L^2$ such that $\beta^* = T^{\frac{1}{2}} \left(T^{\frac{1}{2}} C T^{\frac{1}{2}}\right)^{\alpha} g$, where $\alpha$ is any positive real number.

Note that the assumption implies $\beta^* \in \mathcal{H} $ with additional smoothness. In \cite{ZhangFaster2020}, the authors use this source condition to derive the minimax and faster convergence rate for the Tikhonov regularization with $0 < \alpha \leq \frac{1}{2}$.\vspace{1mm}

\noindent \textbf{Assumption 2.} $\left( \textit{Decay Condition}\right)$ For some $s\in(0,1)$, \begin{equation*}
    \label{decay condition}
    i^{-\frac{1}{s}} \lesssim \xi_{i} \lesssim i^{-\frac{1}{s}} \quad \forall \quad i \in I,
\end{equation*}
where $\left(\xi_{i},e_{i}\right)_{i\in I}$ is the eigenvalue-eigenvector pair of operator $\Lambda$ and the symbol $\lesssim$ means that there exist constants $b,B>0$ such that $b i^{-\frac{1}{s}} \leq \xi_{i} \leq B i^{-\frac{1}{s}}$ for all $i \in I$. \vspace{1mm}
%We assume that for $s \in \left(0,1\right)$ 

 This decay of eigenvalues is related to the effective dimensionality because under this assumption we have that $\mathcal{N}(\lambda):= \text{trace}(\Lambda\left(\Lambda+\lambda I\right)^{-1}) \leq D^2 \left(\kappa \lambda\right)^{-s} \text{ for all } \lambda \in \left(0,1\right]$ and an appropriate choice of $D>0$.\vspace{1mm}
 
\noindent\textbf{Assumption 3.} $\left( \textit{Fourth Moment Condition}\right)$ For any $f \in L^2\left(S\right)$,
\begin{equation*}   
\mathbb{E}\left\langle X,f\right \rangle_{L^2}^{4} \leq c_0 \left(\mathbb{E}\left\langle X,f\right \rangle_{L^2}^{2}\right)^2 \text{ for some constant } c_0>0.
\end{equation*}

We define the early stopping rule to stop the CG method at an early stage $m^* \ll n$. This is mainly used for its implicit regularization property. The early stopping, defined as $m^*$, is the first iteration for which the residual term is less than some predefined threshold. Now we state and prove our main theorem.

\begin{theorem}
    Let $\alpha > 0$, $\tau >0$ and $\mathbb{E}\left\|X\right\|^4 <\infty$. Suppose Assumptions $\left(1\right)$--$\left(3\right)$ hold, and stopping rule holds with threshold $$\Omega = \left(2+\tau\right) n^{-\frac{\alpha+1}{1+s+2\alpha}}.$$
    Then for large enough $n$ and $ \lambda = c\left(\alpha,\delta\right) n^{-\frac{1}{1+s+2\alpha}}$, it holds with probability at least $1-\delta $ that
\begin{equation*}
\left\| \hat{\beta}_{m^*} - \beta^* \right\|_{L^2} \lesssim n^{-\frac{\alpha}{1+s+2 \alpha}},
\end{equation*}
where $c\left(\alpha,\delta\right)$ is a constant that depends only on $\alpha$ and $\delta$.
\end{theorem}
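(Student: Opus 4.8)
The plan is to transport the entire analysis into the $L^2$ picture governed by the single operator $\Lambda_n=T^{\frac12}\hat C_n T^{\frac12}$, where the orthogonal-polynomial machinery of Lemma~\ref{polynomial lemma} is available. Since $\hat\beta_{m^*}=q_{m^*}(J^*\hat C_n J)J^*\hat R$ and $Jq(J^*\hat C_n J)=q(JJ^*\hat C_n)J$, one obtains $J\hat\beta_{m^*}=T^{\frac12}q_{m^*}(\Lambda_n)\hat z$ with $\hat z:=T^{\frac12}\hat R$, while the source condition reads $\beta^*=T^{\frac12}\Lambda^\alpha g$. Hence
$$\|\hat\beta_{m^*}-\beta^*\|_{L^2}=\|T^{\frac12}(q_{m^*}(\Lambda_n)\hat z-\Lambda^\alpha g)\|_{L^2}\le\|T^{\frac12}\|\,\|q_{m^*}(\Lambda_n)\hat z-\Lambda^\alpha g\|_{L^2},$$
so it suffices to control the $L^2$ reconstruction error of $\gamma^*:=\Lambda^\alpha g$ from $\hat z$. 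I would then introduce the comparison level $\lambda:=|p_{m^*}'(0)|^{-1}$, which plays the role of a Tikhonov parameter (more iterations means larger $|p_{m^*}'(0)|$, i.e.\ weaker regularization), set $z:=T^{\frac12}R=\Lambda^{\alpha+1}g$, and split into a noise term $q_{m^*}(\Lambda_n)(\hat z-z)$ and an approximation term $q_{m^*}(\Lambda_n)z-\gamma^*$.

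For the approximation term I would write it as the $\Lambda_n$-residual $-p_{m^*}(\Lambda_n)\Lambda_n^\alpha g$ plus perturbation terms measuring $\Lambda_n-\Lambda$, and bound the residual through the companion function $\phi_{m^*}$ and the projector $F_{x_{1,m^*}}$ from Lemma~\ref{polynomial lemma}; the inequality \eqref{xphi bound} with $\nu=2\alpha$ gives $\sup_x x^{2\alpha}\phi_{m^*}^2(x)\lesssim|p_{m^*}'(0)|^{-2\alpha}=\lambda^{2\alpha}$, i.e.\ a bias of order $\lambda^\alpha$. For the noise term I would factor $q_{m^*}(\Lambda_n)(\hat z-z)=q_{m^*}(\Lambda_n)(\Lambda_n+\lambda)^{\frac12}\cdot(\Lambda_n+\lambda)^{-\frac12}(\hat z-z)$, bound $\|q_{m^*}(\Lambda_n)(\Lambda_n+\lambda)^{\frac12}\|\lesssim\lambda^{-\frac12}$ using $|q_{m^*}|\lesssim|p_{m^*}'(0)|=\lambda^{-1}$ (again from the Lemma), and control the weighted empirical fluctuation $\|(\Lambda+\lambda)^{-\frac12}(\hat z-z)\|$, which collects both the noise $\frac1n\sum\epsilon_iX_i$ and the covariance sampling error, by $\sqrt{\mathcal N(\lambda)/n}$ up to the fourth-moment and $\sigma^2$ constants. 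Inserting $\mathcal N(\lambda)\lesssim(\kappa\lambda)^{-s}$ from Assumption~2 yields a variance of order $\lambda^{-\frac12}\sqrt{\mathcal N(\lambda)/n}\lesssim n^{-\frac12}\lambda^{-\frac{1+s}{2}}$.

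Balancing bias against variance, $\lambda^\alpha=n^{-\frac12}\lambda^{-\frac{1+s}{2}}$ forces $\lambda=c(\alpha,\delta)n^{-\frac{1}{1+s+2\alpha}}$, and both sides then equal $n^{-\frac{\alpha}{1+s+2\alpha}}$, which is the claimed rate. What remains is to certify that the early-stopping index $m^*$ actually realizes this $\lambda$: the threshold $\Omega=(2+\tau)n^{-\frac{\alpha+1}{1+s+2\alpha}}\asymp\lambda^{\alpha+1}$ is precisely the size of the residual $\|p_{m^*}(\Lambda)\Lambda^{\alpha+1}g\|\sim\lambda^{\alpha+1}$ at regularization level $\lambda$, so the discrepancy principle stops near the correct iteration; the monotonicity $|p_{m-1}'(0)|\le|p_m'(0)|$ together with the estimate in part~(4) of Lemma~\ref{polynomial lemma} lets me sandwich $m^*$ and conclude $|p_{m^*}'(0)|\asymp\lambda^{-1}$ with probability at least $1-\delta$. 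The concentration estimates in the supplementary section (operator-Bernstein bounds for $\|(\Lambda+\lambda)^{-\frac12}(\Lambda-\Lambda_n)(\Lambda+\lambda)^{-\frac12}\|$ and for the weighted noise, via Assumption~3) supply the high-probability transfer between $(\Lambda_n,\hat z)$ and $(\Lambda,z)$ and fix the constant $c(\alpha,\delta)$.

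The principal obstacle is that, unlike spectral-filter methods, the residual polynomial $p_{m^*}$ and the stopping index $m^*$ are themselves data-dependent---they depend on $\hat z$ as well as on $\Lambda_n$---so the filter cannot be fixed in advance and the usual supremum bounds over a deterministic filter are unavailable. Decoupling this through the orthogonal-polynomial estimates of Lemma~\ref{polynomial lemma}, in particular pinning the random $m^*$ to the deterministic $\lambda$ via the threshold $\Omega$ and the convexity/monotonicity of the $p_m$, while \emph{simultaneously} transferring all operator bounds from $\Lambda_n$ to $\Lambda$ uniformly over the random stopping time, is the crux. The FLR-specific complication is the two-operator structure $\Lambda=T^{\frac12}CT^{\frac12}$ and the leftover $T^{\frac12}$ weighting: the concentration of $\hat C_n$ and $\hat R$ must be carried out in the $T^{\frac12}$-transformed geometry, and the careful bookkeeping of the extra $\lambda^{-\frac12}$ factor---responsible for the $+1$ in the exponent $1+s+2\alpha$---is where I expect the analysis to be most delicate.
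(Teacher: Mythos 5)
Your scaffolding matches the paper's proof at the endpoints --- the transport to $L^2$ via $Jq_m\left(J^*\hat{C}_nJ\right)J^* = T^{\frac12}q_m\left(\hat{\Lambda}_n\right)T^{\frac12}$, the effective parameter $\lambda \asymp n^{-\frac{1}{1+s+2\alpha}}$, and the use of the threshold $\Omega$ together with Lemma \ref{polynomial lemma}(4) to get $\left|p_{m^*}'\left(0\right)\right| \lesssim \lambda^{-1}$ --- but two of your central estimates fail off the low-frequency subspace, and this is a genuine gap, not bookkeeping. The inequality in Lemma \ref{polynomial lemma}(3) is not a statement about the operator $p_m\left(\hat{\Lambda}_n\right)$: the bound $\left\|p_m\left(\hat{\Lambda}_n\right)v\right\|_{L^2} \leq \left\|F_{x_{1,m}}\phi_m\left(\hat{\Lambda}_n\right)v\right\|_{L^2}$ holds only for the specific vector $v = T^{\frac12}\hat{R}$ generating the orthogonality measure $\mu_n^{(0)}$ (its proof rests on $\left[p_m, q\right]_{(1)} = 0$). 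You invoke it for the bias vector $\Lambda_n^{\alpha}g$, where it is false in general: $p_{m^*}$ has all its roots in $\left(0,\kappa\right]$, is bounded by $1$ only on $\left[0, x_{1,m^*}\right)$, and is uncontrolled at eigenvalues of $\hat{\Lambda}_n$ above $x_{1,m^*}$. The same defect afflicts your variance step: $q_{m^*}\left(x\right) \leq \left|p_{m^*}'\left(0\right)\right|$ is guaranteed by Lemma \ref{polynomial lemma}(2) only on $\left[0, x_{1,m^*}\right)$, so $\left\|q_{m^*}\left(\hat{\Lambda}_n\right)\left(\hat{\Lambda}_n + \lambda I\right)^{\frac12}\right\|_{op} \lesssim \lambda^{-\frac12}$ is unavailable as a bound over the whole spectrum. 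The missing idea is the paper's third term: split the reconstruction error with $F_u$ and $F_u^{\perp}$ for $u = a\left(\alpha,\tau\right)\lambda \leq \left|p_{m^*}'\left(0\right)\right|^{-1} \leq x_{1,m^*}$, keep your bounds on the $F_u$ part, and on $F_u^{\perp}$ invert $\hat{\Lambda}_n$ (eigenvalues $\geq u$ there) so the high-frequency error is dominated by $u^{-1}\left\|\hat{\Lambda}_n q_{m^*}\left(\hat{\Lambda}_n\right)T^{\frac12}\hat{R} - T^{\frac12}\hat{R}\right\|_{L^2} \leq u^{-1}\Omega \asymp \lambda^{\alpha}$ plus a small noise term. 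Thus the stopping rule enters the error bound twice --- once capping the high-frequency error by $\Omega/u$, once (via Lemma \ref{Residual bound} and the failure of the test at $m^*-1$) bounding $\left|p_{m^*-1}'\left(0\right)\right|$ --- not merely to ``pin'' $m^*$; in particular no two-sided sandwich $\left|p_{m^*}'\left(0\right)\right| \asymp \lambda^{-1}$ is needed, and the paper proves only the upper bound.

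A second, quantitative problem is your centering at the population vector $z = \Lambda^{\alpha+1}g$. This creates the perturbation term $q_{m^*}\left(\hat{\Lambda}_n\right)\left(\Lambda - \hat{\Lambda}_n\right)\Lambda^{\alpha}g$, and even granting $\left\|q_{m^*}\left(\hat{\Lambda}_n\right)\right\|_{op} \lesssim \lambda^{-1}$, the unweighted concentration $\left\|\Lambda - \hat{\Lambda}_n\right\|_{HS} \lesssim n^{-\frac12}$ gives a contribution of order $\lambda^{-1}n^{-\frac12}$, which is $\lesssim \lambda^{\alpha}$ at the prescribed $\lambda$ if and only if $s \geq 1$ --- contradicting Assumption 2's $s \in \left(0,1\right)$; the weighted bound of Lemma \ref{Covariance estimation} does not obviously repair this, since the compensating factor $\left(\Lambda + \lambda I\right)$ lands on the wrong side of $q_{m^*}\left(\hat{\Lambda}_n\right)$. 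The paper avoids the term altogether by centering at the \emph{empirical} $\hat{C}_n\beta^*$: the identity $T^{\frac12}\hat{C}_n\beta^* = T^{\frac12}\hat{C}_nT^{\frac12}\Lambda^{\alpha}g = \hat{\Lambda}_n\Lambda^{\alpha}g$ makes the approximation term collapse exactly to $F_u\,p_{m^*}\left(\hat{\Lambda}_n\right)\Lambda^{\alpha}g$ with no $\Lambda - \hat{\Lambda}_n$ remainder, the residual mismatch between $p_{m^*}\left(\hat{\Lambda}_n\right)$ and the population $\Lambda^{\alpha}$ being absorbed by the change-of-operator Lemma \ref{fb}, while the noise becomes the genuinely centered $\hat{R} - \hat{C}_n\beta^* = \frac1n\sum_i \epsilon_i X_i$ handled by Lemma \ref{Empirical bound}. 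Your bias/variance balance and the final exponent are correct, but as written the decomposition does not close.
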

\begin{proof}
    Let $\lambda \geq \left(\frac{4c_{1}^2}{n}\right)^{\frac{1}{s+1}}$ and define $F_{u}^{\perp}:= \left(I-F_{u}\right)$. We start by considering the error term:
    \begin{equation*}
    \begin{split}
        \left\| \hat{\beta}_{m} - \beta^* \right\|_{L^2}  = & \left\| J \hat{\beta}_{m} -\beta^* \right\|_{L^2} \\
         = & \left\|Jq_{m}\left(J^{*}\hat{C}_{n}J\right)J^{*}\hat{R}-\beta^*\right\|_{L^2}\\
         = & \left\|T^{\frac{1}{2}}q_{m}\left(\hat{\Lambda}_{n}\right)T^{\frac{1}{2}}\hat{R}-T^{\frac{1}{2}}\Lambda^{\alpha} g\right\|_{L^2}\\
         \leq & \left\|T^{\frac{1}{2}}\right\|_{op}\left\|q_{m}\left(\hat{\Lambda}_{n}\right)T^{\frac{1}{2}}\hat{R}-\Lambda^{\alpha} g\right\|_{L^2}\\
        \lesssim & \left\|q_{m}\left(\hat{\Lambda}_{n}\right)T^{\frac{1}{2}}\hat{R}-\Lambda^{\alpha} g\right\|_{L^2}\\
        \leq & \left\|F_{u}\left(q_{m}\left(\hat{\Lambda}_{n}\right)T^{\frac{1}{2}}\hat{R}-\Lambda^{\alpha} g\right)\right\|_{L^2} + \left\|F_{u}^{\perp}\left(q_{m}\left(\hat{\Lambda}_{n}\right)T^{\frac{1}{2}}\hat{R}-\Lambda^{\alpha} g\right)\right\|_{L^2}\\
         \leq &\underbrace{\left\|F_{u}\left(q_{m}\left(\hat{\Lambda}_{n}\right)T^{\frac{1}{2}}\hat{R}-q_{m}\left(\hat{\Lambda}_{n}\right)T^{\frac{1}{2}}\hat{C}_{n}\beta^*\right)\right\|_{L^2}}_{Term-1}\\
         &\qquad+\underbrace{\left\|F_{u}\left(q_{m}\left(\hat{\Lambda}_{n}\right)T^{\frac{1}{2}}\hat{C}_{n}\beta^*-\Lambda^{\alpha} g \right)\right\|_{L^2}}_{Term-2} \\
         & \qquad\qquad+ \underbrace{\left\|F_{u}^{\perp}\left(q_{m}\left(\hat{\Lambda}_{n}\right)T^{\frac{1}{2}}\hat{R}-\Lambda^{\alpha} g\right)\right\|_{L^2}}_{Term-3}.
    \end{split}
    \end{equation*}
In the third step we have used that $Jq_{m}\left(J^*\hat{C}_{n}J\right)J^* = T^{\frac{1}{2}}q_{m}\left(T^{\frac{1}{2}}\hat{C}_{n}J\right)T^{\frac{1}{2}}$ which can be derived easily using spectral representation. Further, we will estimate each term separately using Lemma \ref{Covariance estimation} and Lemma \ref{Empirical bound}.\vspace{2mm}\\
\textit{Estimation of Term-1:} 
\begin{equation*}
    \begin{split}
     &  \left\|F_{u}\left(q_{m}\left(\hat{\Lambda}_{n}\right)T^{\frac{1}{2}}\left(\hat{R}-\hat{C}_{n}\beta^*\right)\right)\right\|_{L^2}\\
     \leq & \left\|F_{u}q_{m}\left(\hat{\Lambda}_{n}\right)\left(\hat{\Lambda}_{n}+\lambda I\right)^{\frac{1}{2}}\right\|_{op} \left\|\left(\hat{\Lambda}_{n}+\lambda I\right)^{-\frac{1}{2}}\left(\Lambda+\lambda I\right)^{\frac{1}{2}}\right\|_{op} \\
     & \times \left\|\left(\Lambda+\lambda I\right)^{-\frac{1}{2}}T^{\frac{1}{2}}\left(\hat{R}-\hat{C}_{n}\beta^*\right)\right\|_{L^2} \\
     \lesssim & \sqrt{\frac{ \sigma^2 \mathcal{N}\left(\lambda\right)}{n \delta}}\left\|F_{u}q_{m}\left(\hat{\Lambda}_{n}\right)\left(\hat{\Lambda}_{n}+\lambda I\right)^{\frac{1}{2}}\right\|_{op} \\
     \leq &  \sqrt{\frac{ \sigma^2 \mathcal{N}\left(\lambda\right)}{n \delta}} \left(\sup_{x\in \left[0,u \right]}x^{\frac{1}{2}}q_{m}\left(x\right)+\lambda^{\frac{1}{2}}\sup_{x\in \left[0,u \right]}q_{m}\left(x\right)\right)\\
    \leq &  \sqrt{\frac{ \sigma^2 \mathcal{N}\left(\lambda\right)}{n \delta}} \left(\left(\sup_{x\in \left[0,u \right]}q_{m}\left(x\right)\right)^{\frac{1}{2}}\left(\sup_{x\in \left[0,u \right]} x q_{m}\left(x\right)\right)^{\frac{1}{2}} + \lambda^{\frac{1}{2}}\left|p_{m}^{'}\left(0\right)\right|\right) \\
    \leq &  \sqrt{\frac{ \sigma^2 \mathcal{N}\left(\lambda\right)}{n \delta}} \left(\left|p_{m}^{'}\left(0\right)\right|^{\frac{1}{2}}+\lambda^{\frac{1}{2}}\left|p_{m}^{'}\left(0\right)\right|\right).
    \end{split}
\end{equation*}
 For the last inequality we use that $u \leq x_{1,m}$, $p_{m}$ is decreasing and convex in $\left[0,u\right]$ and $q_{m}\left(x\right)\leq \left|p_{m}^{'}\left(0\right)\right|$ for all $x \in \left[0,u\right)$\\
\\
\textit{Estimation of Term-2:} Using Lemma \ref{fb}, and the fact that $\left|p_{m}\left(x\right)\right| \leq 1$ for all $x \in \left[0,u\right)$, we get
\begin{equation*}
    \begin{split}
        & \left\|F_{u}\left(q_{m}\left(\hat{\Lambda}_{n}\right)T^{\frac{1}{2}}\hat{C}_{n}\beta^*-\Lambda^{\alpha} g \right)\right\|_{L^2} \\
        = & \left\|F_{u}\left(q_{m}\left(\hat{\Lambda}_{n}\right)T^{\frac{1}{2}}\hat{C}_{n}T^{\frac{1}{2}}\Lambda^{\alpha}-\Lambda^{\alpha} g \right) \right\|_{L^2}\\
        = & \left\|F_{u}\left(q_{m}\left(\hat{\Lambda}_{n}\right)\hat{\Lambda}_{n}-I\right)\Lambda^{\alpha} g\right\|_{L^2} 
        = \left\|F_{u}p_{m}\left(\hat{\Lambda}_{n}\right)\Lambda^{\alpha} g\right\|_{L^2} \\
        \leq & 2 \left(\sup_{t \in \left[0,u\right]}t^{\alpha}p_{m}\left(t\right)+ \max\left\{\alpha, 1 \right\} Z_{\alpha}\left(\lambda\right)\sup_{t \in \left[0,u\right]}p_{m}\left(t\right)\right)\\
        \leq & 2 \left({u}^{\alpha}+ \max\left\{\alpha, 1 \right\}Z_{\alpha}\left(\lambda\right)\right).
    \end{split}\vspace{1mm}
\end{equation*}
\textit{Estimation of Term-3:}
\begin{equation*}
    \begin{split}
        & \left\|F_{u}^{\perp}\left(q_{m}\left(\hat{\Lambda}_{n}\right)T^{\frac{1}{2}}\hat{R}-\Lambda^{\alpha} g\right)\right\|_{L^2}\\
        = & \left\|F_{u}^{\perp}\left(\hat{\Lambda}_{n}\right)^{-1}\hat{\Lambda}_{n}\left(q_{m}\left(\hat{\Lambda}_{n}\right)T^{\frac{1}{2}}\hat{R}-\Lambda^{\alpha} g\right)\right\|_{L^2}\\
        \leq & \left\|F_{u}^{\perp}\left(\hat{\Lambda}_{n}\right)^{-1}\left(\hat{\Lambda}_{n}q_{m}\left(\hat{\Lambda}_{n}\right)T^{\frac{1}{2}}\hat{R}-T^{\frac{1}{2}}\hat{R}+T^{\frac{1}{2}}\hat{R}-\hat{\Lambda}_{n}\Lambda^{\alpha} g\right)\right\|_{L^2}\\
        \leq & \left\|F_{u}^{\perp}\left(\hat{\Lambda}_{n}\right)^{-1}\left(\hat{\Lambda}_{n}q_{m}\left(\hat{\Lambda}_{n}\right)T^{\frac{1}{2}}\hat{R}-T^{\frac{1}{2}}\hat{R}\right)\right\|_{L^2}+\left\|F_{u}^{\perp}\left(\hat{\Lambda}_{n}\right)^{-1}\left(T^{\frac{1}{2}}\hat{R}-\hat{\Lambda}_{n}\Lambda^{\alpha} g\right)\right\|_{L^2}
        \end{split}
        \end{equation*}
        \begin{equation*}
        \begin{split}
        \leq & \frac{1}{u}\left\|\hat{\Lambda}_{n}q_{m}\left(\hat{\Lambda}_{n}\right)T^{\frac{1}{2}}\hat{R}-T^{\frac{1}{2}}\hat{R}\right\|_{L^2} + \frac{\left(u+\lambda\right)^{\frac{1}{2}}}{u}\sqrt{\frac{2 \sigma^2 \mathcal{N}\left(\lambda\right)}{n \delta}}.
    \end{split}
\end{equation*}
So by combining all three terms, we get
\begin{equation*}
\label{error bound}
    \begin{split}
        \left\|\hat{\beta}_{m}-\beta^*\right\|_{L^2} \lesssim & \sqrt{\frac{ \sigma^2 \mathcal{N}\left(\lambda\right)}{n \delta}}\frac{\left(\tilde{u}+\lambda\right)^{\frac{1}{2}}}{\tilde{u}}+ \left({u}^{\alpha}+ \max\left\{\alpha, 1 \right\}Z_{\alpha}\left(\lambda\right)\right)\\
        & +\frac{1}{u}\left\|\hat{\Lambda}_{n}q_{m}\left(\hat{\Lambda}_{n}\right)T^{\frac{1}{2}}\hat{R}-T^{\frac{1}{2}}\hat{R}\right\|_{L^2},
    \end{split}
\end{equation*}
where $\tilde{u}= \min\left\{u, \left|p_{m}^{'}\left(0\right)\right|^{-1}\right\}$. Now we define our stopping rule as
\begin{equation*}
    m^* = \inf \left\{m \geq  0  \quad : \left\|\hat{\Lambda}_{n}q_{m}\left(\hat{\Lambda}_{n}\right)T^{\frac{1}{2}}\hat{R}-T^{\frac{1}{2}}\hat{R}\right\|_{L^2} \leq \Omega \right\},
\end{equation*}
Then
\begin{equation}
\label{errorbound2}
    \begin{split}
        \left\| \hat{\beta}_{m^*} - \beta^* \right\|_{L^2} \lesssim & \sqrt{\frac{ \sigma^2 \mathcal{N}\left(\lambda\right)}{n \delta}}\frac{\left(\tilde{u}+\lambda\right)^{\frac{1}{2}}}{\tilde{u}}+ \left({u}^{\alpha}+ \max\left\{\alpha, 1 \right\}Z_{\alpha}\left(\lambda\right)\right) +\frac{1}{u} \Omega.
    \end{split}
\end{equation}
\noindent
We still have to bound $\left|p_{m^*}^{'}\left(0\right)\right|$ and for that we will use Lemma \ref{Residual bound}. First we will bound $\left|p_{m^*-1}^{'}\left(0\right)\right|$. We assume that $0< u< x_{1,m-1}\leq x_{1,m-1}^{\left(2\right)}$ (see Lemma \ref{polynomial lemma}). Consider
\begin{equation*}
    \begin{split}
        &\left[p_{m-1},p_{m-1}\right]_{\left(0\right)}^{\frac{1}{2}}\\ =& \left\|p_{m-1}\left(\hat{\Lambda}_{n}\right)T^{\frac{1}{2}}\hat{R}\right\|_{L^2} \\
        \leq & \left\|p_{m-1}^{\left(2\right)}\left(\hat{\Lambda}_{n}\right)T^{\frac{1}{2}}\hat{R}\right\|_{L^2}  \quad \left( \text{by Lemma } \ref{polynomial lemma} \right)\\
        \leq & \left\|F_{u}p_{m-1}^{\left(2\right)}\left(\hat{\Lambda}_{n}\right)T^{\frac{1}{2}}\hat{R}\right\|_{L^2}+\left\|F_{u}^{\perp}p_{m-1}^{\left(2\right)}\left(\hat{\Lambda}_{n}\right)T^{\frac{1}{2}}\hat{R}\right\|_{L^2}\\
        \leq & \left\|F_{u}T^{\frac{1}{2}}\hat{R}\right\|_{L^2} + u^{-\frac{1}{2}}\left\|p_{m-1}^{\left(2\right)}\left(\hat{\Lambda}_{n}\right)\hat{\Lambda}_{n}^{\frac{1}{2}}T^{\frac{1}{2}}\hat{R}\right\|_{L^2} \\
        \leq & \left\|F_{u}\left(T^{\frac{1}{2}}\hat{R}-T^{\frac{1}{2}}\hat{C}_{n}\beta^*\right)\right\|_{L^2} + \left\|F_{u}T^{\frac{1}{2}}\hat{C}_{n}\beta^*\right\|_{L^2}+ u^{-\frac{1}{2}}\left\|p_{m-1}^{\left(2\right)}\left(\hat{\Lambda}_{n}\right)\hat{\Lambda}_{n}^{\frac{1}{2}}T^{\frac{1}{2}}\hat{R}\right\|_{L^2}.
             \end{split}
    \end{equation*}
    From Lemma \ref{Covariance estimation}, Lemma \ref{Empirical bound} and Lemma \ref{fb}, we get
    \begin{equation*}
        \begin{split}
        &\left[p_{m-1},p_{m-1}\right]_{\left(0\right)}^{\frac{1}{2}}\\ = & \sqrt{\frac{2 \sigma^2 \mathcal{N}\left(\lambda\right)}{n \delta}}\left\|F_{u}\left(\hat{\Lambda}_{n}+\lambda I\right)^{\frac{1}{2}}\right\|_{op} + \left\|F_{u}\hat{\Lambda}_{n}\Lambda^{\alpha}g\right\|_{L^2}+u^{-\frac{1}{2}}\left\|p_{m-1}^{\left(2\right)}\left(\hat{\Lambda}_{n}\right)\hat{\Lambda}_{n}^{\frac{1}{2}}T^{\frac{1}{2}}\hat{R}\right\|_{L^2} \\
        \leq & \sqrt{\frac{ 2\sigma^2 \mathcal{N}\left(\lambda\right)}{n \delta}}\left(u + \lambda\right)^{\frac{1}{2}}+2 c\left(\alpha \right) u \left(u^{\alpha}+Z_{\alpha}\left(\lambda\right)\right)\left\|g\right\|_{L^2}+u^{-\frac{1}{2}}\left\|p_{m-1}^{\left(2\right)}\left(\hat{\Lambda}_{n}\right)\hat{\Lambda}_{n}^{\frac{1}{2}}T^{\frac{1}{2}}\hat{R}\right\|_{L^2} \\
        = & \sqrt{\frac{ 2\sigma^2 \mathcal{N}\left(\lambda\right)}{n \delta}}\left(u + \lambda\right)^{\frac{1}{2}}+2 c\left(\alpha \right) u \left(u^{\alpha}+Z_{\alpha}\left(\lambda\right)\right)\left\|g\right\|_{L^2}+u^{-\frac{1}{2}}\left[p_{m-1}^{\left(2\right)},p_{m-1}^{\left(2\right)}\right]_{\left(1\right)}^{\frac{1}{2}},
    \end{split}
\end{equation*}
where $c\left(\alpha \right)$ is a constant depending on $\alpha$.
Here we have used that $\left|p_{m-1}^{\left(2\right)}\left(x\right)\right| \leq 1$  for $x \in \left[0,x_{m-1}^{\left(2\right)}\right]$. Using Assumption $3$ and with the choice of $\lambda = c\left(\alpha,\delta\right) n^{-\frac{1}{1+s+2 \alpha}}$, we get that $\sqrt{\frac{ \sigma^2 \mathcal{N}\left(\lambda\right)}{n \delta}} \lesssim \lambda^{\alpha+\frac{1}{2}}$ and $Z_{\alpha}\left(\lambda\right) \leq \lambda^{\alpha}$.\\
\noindent
From Lemma \ref{Residual bound} and the stopping rule $\left\|\hat{\Lambda}_{n}q_{m^*-1}\left(\hat{\Lambda}_{n}\right)T^{\frac{1}{2}}\hat{R}-T^{\frac{1}{2}}\hat{R}\right\|_{L^2} > \Omega$, we get
\begin{equation*}
    \begin{split}
        &\left(2+\tau\right)\lambda^{\frac{1}{2}}\lambda^{\frac{1}{2}+\alpha}  \\
        < &  \sqrt{\frac{2 \sigma^2 \mathcal{N}\left(\lambda\right)}{n \delta}}\left(\left|p_{m^*-1}^{'}\left(0\right)\right|^{-\frac{1}{2}}+\lambda^{\frac{1}{2}}\right)
        \\
        & + 2 \left(\left|p_{m^*-1}^{'}\left(0\right)\right|^{-\left(\alpha +1\right)}+c\left(\alpha \right)Z_{\alpha}\left(\lambda\right)\left|p_{m^*-1}^{'}\left(0\right)\right|^{-1}\right)\left\|g\right\|_{L^2}\\
         \leq & \sqrt{2}\lambda^{\frac{1}{2}+\alpha}\left(\left|p_{m^*-1}^{'}\left(0\right)\right|^{-\frac{1}{2}}+\lambda^{\frac{1}{2}}\right) \\
         & + 2 \left(\left|p_{m^*-1}^{'}\left(0\right)\right|^{-\left(\alpha +1\right)}+c\left(\alpha \right)Z_{\alpha}\left(\lambda\right)\left|p_{m^*-1}^{'}\left(0\right)\right|^{-1}\right)\left\|g\right\|_{L^2}.
    \end{split}
\end{equation*}
Therefore, we have
         \begin{equation*} 
         \begin{split}
         \tau \lambda^{\frac{1}{2}}\lambda^{\frac{1}{2}+\alpha} \leq  c\left(\alpha \right) \max &  \left\{\lambda^{\frac{1}{2}}\lambda^{\frac{1}{2}+\alpha}\left|p_{m^*-1}^{'}\left(0\right)\right|^{-\frac{1}{2}},\left|p_{m^*-1}^{'}\left(0\right)\right|^{-\left(\alpha +1\right)}\right.\left\|g\right\|_{L^2},\\        & \left. Z_{\alpha}\left(\lambda\right)\left|p_{m^*-1}^{'}\left(0\right)\right|^{-1}\left\|g\right\|_{L^2}\right\}.
        \end{split}
\end{equation*}
From all three cases, we will get that
\begin{equation*}
    \left|p_{m^*-1}^{'}\left(0\right)\right| \leq {c_2}\left(\alpha, \tau \right) \lambda^{-1}.
\end{equation*}
In the next step we get an bound on $\left|p_{m^*}^{'}\left(0\right)\right|$. From Lemma \ref{polynomial lemma}, we know that
\begin{equation*}
    \left|p_{m-1}^{'}\left(0\right)-p_{m}^{'}\left(0\right)\right| \leq \frac{\left[p_{m-1},p_{m-1}\right]_{\left(0\right)}}{\left[p_{m-1}^{\left(2\right)},p_{m-1}^{\left(2\right)}\right]_{\left(1\right)}}.
\end{equation*}
Define $u : = a\left(\alpha, \tau\right) \lambda$, where $a\left(\alpha, \tau\right)$  is a constant. Using this choice we get
\begin{equation*}
    \begin{split}
        \sqrt{\frac{2 \sigma^2 \mathcal{N}\left(\lambda\right)}{n \delta}}\left(u + \lambda\right)^{\frac{1}{2}}+2 c\left(\alpha \right) u \left(u^{\alpha}+Z_{\alpha}\left(\lambda\right)\right)\left\|g\right\|_{L^2} \leq  c_3\left(\alpha,\tau\right)\lambda^{\frac{1}{2}}\lambda^{\alpha+\frac{1}{2}}.
    \end{split}
\end{equation*}
From the stopping rule, we know that
\begin{equation*}
\left\|\hat{\Lambda}_{n}q_{m^*-1}\left(\hat{\Lambda}_{n}\right)T^{\frac{1}{2}}\hat{R}-T^{\frac{1}{2}}\hat{R}\right\|_{L^2} > \Omega = \left(2+\tau\right)\lambda^{\frac{1}{2}}\lambda^{\alpha+\frac{1}{2}}.
\end{equation*}
So now we can see that using all these inequalities we have that 
\begin{equation}
\label{pmbound}
    \left|p_{m^*}^{'}\left(0\right)\right| \leq c_4\left(\alpha,\tau\right)\lambda^{-1}.
\end{equation}
Using $\left(\ref{pmbound}\right)$, the choice of $a\left(\alpha,\tau\right)$ can be made accordingly such that
\begin{equation*}
    u \leq \left|p_{m^*}^{'}\left(0\right)\right|^{-1} \leq x_{1,m}.
\end{equation*}
With this inequality, we can choose $\tilde{u}=u$.
Now with this choice of $\tilde{u}= a\left(\alpha,\tau\right)\lambda $, where $a\left(\alpha,\tau\right)$ has been taken to satisfy all the conditions on $\tilde{u}$, we will further bound $\left(\ref{errorbound2}\right)$. Therefore, we get that
\begin{equation*}
    \begin{split}
        \left\| \hat{\beta}_{m^*} - \beta^* \right\|_{L^2} \lesssim  & \sqrt{2} \delta\left(\lambda \right) \tilde{u}^{-1}\left(\lambda+\tilde{u}\right)^{\frac{1}{2}}+2 \left({u}^{\alpha}+ \max\left\{\alpha, 1 \right\}Z_{\alpha}\left(\lambda\right)\right)\\
        & +\frac{1}{u}\left\|\hat{\Lambda}_{n}q_{m^*}\left(\hat{\Lambda}_{n}\right)T^{\frac{1}{2}}\hat{R}-T^{\frac{1}{2}}\hat{R}\right\|_{L^2}\le
        \lambda^{\alpha}.
    \end{split}
\end{equation*}
Now by using $\lambda =c\left(\alpha,\delta\right) n^{-\frac{\alpha +1}{1+s+2\alpha}}$, the result follows.
% we get 
% \begin{equation*}
%     \left\| \hat{\beta}_{m^*} - \beta^* \right\| \lesssim n^{-\frac{\alpha}{1+s+2 \alpha}}.
% \end{equation*}
\end{proof}
%\begin{remark} 
In the RKHS framework, the first minimax convergence rate was established by
Cai and Yuan \cite{ARKHSFORFLR,tonyyuan2012minimax} for the Tikhonov regularization method with the source condition $\beta^* \in \mathcal{H}$. Later in \cite{ZhangFaster2020}, the results were extended, and the minimax convergence rates  derived with the source condition 
$\beta^* \in \mathcal{R}\left(T^{\frac{1}{2}}\left(T^{\frac{1}{2}}CT^{\frac{1}{2}}\right)^{\alpha}\right) \text{ for } 0< \alpha \leq \frac{1}{2} $.
 Our convergence rates for the conjugate gradient method match the minimax rates of \cite{ZhangFaster2020} 
 and also match with the convergence of rates of \cite{guo2023capacity}. 

%%%%%%%%%%%%%%%%%%%%%%%%%%%%%%%%%%%%%%%%%%%%%%%%%%%%%%%%%%%%%%%%%%%%%%%%%%
\section{Supplementary Results}
\label{supplements}
Technical details in the paper depend on the estimation of the residual term as the CG method works to reduce the residual term as the number of iterations progresses. To simplify the technical part we use the fact that $\left\|J^{*} \hat{C}_{n}J \hat{\beta}_m-J^{*}\hat{R}\right\|_{\mathcal{H}} = \left\|\hat{\Lambda}_{n}q_{m}\left(\hat{\Lambda}_{n}\right)T^{\frac{1}{2}}\hat{R}-T^{\frac{1}{2}}\hat{R}\right\|_{L^2}$. We introduce several lemmas that aid us to estimate the residual term and to prove the main theorem.
\begin{lemma}
\label{Covariance estimation}
Under Assumptions $(2)$ and $(3)$, we get
\begin{equation*} 
\left\|\left(\hat{\Lambda}_{n}-\Lambda\right)\left(\Lambda+\lambda I\right)^{-1}\right\|_{op} \leq c_{1}\left(n \lambda^{1+s}\right)^{-\frac{1}{2}}
\end{equation*}
for some constant $c_{1}>0$.
\end{lemma}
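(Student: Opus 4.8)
The plan is to write $\hat{\Lambda}_n - \Lambda$ as a centered average of i.i.d.\ rank-one operators, control $\|(\hat{\Lambda}_n - \Lambda)(\Lambda + \lambda I)^{-1}\|_{op}$ through its Hilbert--Schmidt norm in mean square, and then convert the mean-square bound into a high-probability statement by Markov's inequality (this last step is what produces the $\delta^{-1/2}$ absorbed into $c_1$, consistent with the $1/\delta$ factors appearing throughout the proof of the main theorem). Concretely, set $w_i := T^{\frac12} X_i$ and $\zeta_i := T^{\frac12}(X_i \otimes X_i) T^{\frac12} = w_i \otimes w_i$, so that $\hat{\Lambda}_n = \frac1n \sum_{i=1}^n \zeta_i$ and $\mathbb{E}[\zeta_i] = T^{\frac12} C T^{\frac12} = \Lambda$. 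Writing $Z_i := (\zeta_i - \Lambda)(\Lambda+\lambda I)^{-1}$, the $Z_i$ are i.i.d.\ with $\mathbb{E}[Z_i] = 0$, and since $\|\cdot\|_{op} \le \|\cdot\|_{HS}$, independence gives
\[
\mathbb{E}\left\|(\hat{\Lambda}_n - \Lambda)(\Lambda+\lambda I)^{-1}\right\|_{HS}^2 = \frac1n\, \mathbb{E}\|Z_1\|_{HS}^2 \le \frac1n\, \mathbb{E}\left\|\zeta_1 (\Lambda+\lambda I)^{-1}\right\|_{HS}^2 .
\]

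The heart of the argument is the mean-square estimate of this rank-one term. Since $\zeta_1(\Lambda+\lambda I)^{-1} = w_1 \otimes \big((\Lambda+\lambda I)^{-1} w_1\big)$ has rank one,
\[
\left\|\zeta_1(\Lambda+\lambda I)^{-1}\right\|_{HS}^2 = \|w_1\|^2 \, \big\|(\Lambda+\lambda I)^{-1} w_1\big\|^2 \le \lambda^{-1}\, \|w_1\|^2 \, \big\langle (\Lambda+\lambda I)^{-1} w_1, w_1\big\rangle ,
\]
using $(\Lambda+\lambda I)^{-2} \le \lambda^{-1}(\Lambda+\lambda I)^{-1}$. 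I would then expand both quadratic forms in the eigenbasis $(e_j,\xi_j)_{j}$ of $\Lambda$ from Assumption 2, turning the right-hand side into $\lambda^{-1}\sum_{j,k}(\xi_k+\lambda)^{-1}\langle w_1, e_j\rangle^2 \langle w_1, e_k\rangle^2$. The key point is to control the coupled fourth-order moments $\mathbb{E}[\langle w_1, e_j\rangle^2 \langle w_1, e_k\rangle^2]$: by Cauchy--Schwarz this is at most $(\mathbb{E}\langle w_1,e_j\rangle^4)^{1/2}(\mathbb{E}\langle w_1,e_k\rangle^4)^{1/2}$, and applying Assumption 3 to $f = T^{\frac12} e_j$ (note $\langle w_1, e_j\rangle = \langle X_1, T^{\frac12} e_j\rangle$ and $\mathbb{E}\langle w_1,e_j\rangle^2 = \langle \Lambda e_j, e_j\rangle = \xi_j$) gives $\mathbb{E}\langle w_1, e_j\rangle^4 \le c_0 \xi_j^2$. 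Hence $\mathbb{E}[\langle w_1,e_j\rangle^2\langle w_1,e_k\rangle^2] \le c_0 \xi_j \xi_k$, and the double sum factorizes as
\[
\mathbb{E}\left\|\zeta_1(\Lambda+\lambda I)^{-1}\right\|_{HS}^2 \le c_0\, \lambda^{-1} \Big(\sum_j \xi_j\Big)\Big(\sum_k \frac{\xi_k}{\xi_k+\lambda}\Big) = c_0\, \lambda^{-1}\, \mathrm{tr}(\Lambda)\, \mathcal{N}(\lambda) .
\]

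To finish, I would observe that $\mathrm{tr}(\Lambda) = \sum_j \xi_j < \infty$ is a finite constant, since Assumption 2 gives $\xi_j \lesssim j^{-1/s}$ with $s \in (0,1)$, and that the effective-dimension bound $\mathcal{N}(\lambda) \le D^2(\kappa \lambda)^{-s}$ recorded after Assumption 2 yields $\mathcal{N}(\lambda) \lesssim \lambda^{-s}$. Combining the displays gives $\mathbb{E}\|(\hat{\Lambda}_n - \Lambda)(\Lambda+\lambda I)^{-1}\|_{HS}^2 \lesssim (n\lambda^{1+s})^{-1}$, and Markov's inequality then yields, with probability at least $1-\delta$, the claimed bound $\|(\hat{\Lambda}_n - \Lambda)(\Lambda+\lambda I)^{-1}\|_{op} \le c_1 (n\lambda^{1+s})^{-1/2}$, with $c_1$ absorbing $\sqrt{c_0\,\mathrm{tr}(\Lambda)/\delta}$ and the effective-dimension constant.

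The main obstacle is precisely the mean-square control of the coupled quadratic term $\|w_1\|^2\langle(\Lambda+\lambda I)^{-1}w_1,w_1\rangle$: because $X$ is not assumed almost surely bounded, one cannot invoke an operator Bernstein inequality with a deterministic norm bound, and it is Assumption 3 that tames this fourth-order moment and makes the sum collapse to $\mathrm{tr}(\Lambda)\mathcal{N}(\lambda)$. A secondary point to track is that the \emph{one-sided} resolvent $(\Lambda+\lambda I)^{-1}$ (rather than a symmetrized $(\Lambda+\lambda I)^{-1/2}(\cdot)(\Lambda+\lambda I)^{-1/2}$) forces the extra $\lambda^{-1}$ factor above, which is exactly what upgrades the $\lambda^{-s}$ of $\mathcal{N}(\lambda)$ to the claimed $\lambda^{-(1+s)}$ inside the square root.
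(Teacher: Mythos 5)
Your proof is correct and is essentially the argument the paper itself relies on: the paper skips the proof and cites Lemma 2 of Cai and Yuan \cite{tonyyuan2012minimax}, whose "similar steps" are exactly your eigenbasis second-moment computation --- bounding the operator norm by the Hilbert--Schmidt norm, reducing $\mathbb{E}\|Z_1\|_{HS}^2$ to the coupled fourth moments $\mathbb{E}\big[\langle w_1,e_j\rangle^2\langle w_1,e_k\rangle^2\big]\leq c_0\,\xi_j\xi_k$ via Assumption 3, collapsing the double sum to $\mathrm{tr}(\Lambda)\,\mathcal{N}(\lambda)\lesssim \lambda^{-s}$, and finishing with Chebyshev's inequality. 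You are also right to flag that the lemma as stated suppresses the probability qualifier: it is a probability-$(1-\delta)$ bound with $\delta^{-1/2}$ absorbed into $c_1$, consistent with how it is invoked alongside the $1/\delta$ factors in the main theorem.
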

\noindent
We skip the proof of this lemma as it follows from the similar steps of Lemma 2 in \cite{tonyyuan2012minimax}.\\
From Lemma \ref{Covariance estimation} and for $\lambda \geq \left(\frac{4c_{1}^2}{n}\right)^{\frac{1}{s+1}}$, we get
\begin{equation*}
    \left\|\left(\hat{\Lambda}_{n}-\Lambda\right)\left(\Lambda+\lambda I\right)^{-1}\right\|_{op} \leq \frac{1}{2}.
\end{equation*}
As a consequence, we get
\begin{equation*}
    \begin{split}
        \left\|\left(\Lambda+\lambda I\right)\left(\hat{\Lambda}_{n}+\lambda I\right)^{-1}\right\|_{op} 
        & = \left\|\left[\left(\hat{\Lambda}_{n}-\Lambda\right)\left(\Lambda+\lambda I\right)^{-1}+I\right]^{-1}\right\|_{op} \\
        & \leq \frac{1}{1-\left\|\left(\hat{\Lambda}_{n}-\Lambda\right)\left(\Lambda+\lambda I\right)^{-1}\right\|_{op}}\\
        & \leq 2, \quad \forall  \lambda \geq \left(\frac{4c_{1}^2}{n}\right)^{\frac{1}{s+1}}.
    \end{split}
\end{equation*}
Using Corde's inequality, $\left(\left\|A^{\nu}B^{\nu}\right\|_{op}\leq \left\|AB\right\|_{op}^{\nu},\,0\le \nu\le 1\right)$, where $A$ and $B$ are self-adjoint positive operators, we get
\begin{equation}
\label{covariance estimation1}
    \left\|\left(\Lambda+\lambda I\right)^{\nu}\left(\hat{\Lambda}_{n}+\lambda I\right)^{-\nu}\right\|_{op} \leq 2^{\nu}, \quad \forall  \nu \in S, \lambda \geq \left(\frac{4c_{1}^2}{n}\right)^{\frac{1}{s+1}}.
\end{equation}
%%%%%%%%%%%%%%%%%%%%%%%%%%%%%%%%%%%%%%%%%%%%%%%%%%%%%%%%%%%
\begin{lemma}
\label{Empirical bound}
For $\delta >0$, with at least probability $1-\delta$, we have that
\begin{equation*}
\left\|\left(\Lambda+\lambda I\right)^{-\frac{1}{2}}T^{\frac{1}{2}}\left(\hat{R}-\hat{C}_{n}\beta^*\right)\right\|_{L^2} \leq \sqrt{\frac{ \sigma^2 \mathcal{N}\left(\lambda\right)}{n \delta}}.
\end{equation*}

\end{lemma}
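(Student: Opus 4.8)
The plan is to reduce the quantity inside the norm to a normalized sum of i.i.d.\ centered random elements of $L^2(S)$ and then control it by a second-moment (Chebyshev/Markov) argument, with the effective dimension $\mathcal{N}(\lambda)$ emerging from a trace computation. First I would rewrite the residual vector: by the definitions of $\hat{R}$ and $\hat{C}_{n}$ and the tensor-product convention $(X_i\otimes X_i)\beta^* = X_i\langle X_i,\beta^*\rangle_{L^2}$,
\[
\hat{R}-\hat{C}_{n}\beta^* = \frac{1}{n}\sum_{i=1}^n X_i\bigl(Y_i-\langle X_i,\beta^*\rangle_{L^2}\bigr) = \frac{1}{n}\sum_{i=1}^n \epsilon_i X_i,
\]
where the last equality uses the model $Y_i = \langle X_i,\beta^*\rangle_{L^2}+\epsilon_i$. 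Setting $\zeta_i := \left(\Lambda+\lambda I\right)^{-\frac12}T^{\frac12}\epsilon_i X_i$, these are i.i.d.\ elements of $L^2(S)$, and since $\epsilon_i$ is independent of $X_i$ and centered, $\mathbb{E}[\zeta_i]=0$. The target quantity is precisely $\bigl\|\tfrac1n\sum_i\zeta_i\bigr\|_{L^2}$.

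Next I would compute the second moment. Because the $\zeta_i$ are independent and centered, all cross terms vanish, so
\[
\mathbb{E}\Bigl\|\frac{1}{n}\sum_{i=1}^n\zeta_i\Bigr\|_{L^2}^2 = \frac{1}{n}\,\mathbb{E}\|\zeta_1\|_{L^2}^2 .
\]
Using independence of $\epsilon_1$ and $X_1$ together with $\mathbb{E}\epsilon_1^2=\sigma^2$, and writing $A := \left(\Lambda+\lambda I\right)^{-\frac12}T^{\frac12}$ so that $A^*A = T^{\frac12}\left(\Lambda+\lambda I\right)^{-1}T^{\frac12}$, I would express $\mathbb{E}\|\zeta_1\|_{L^2}^2 = \sigma^2\,\mathbb{E}\langle X_1, A^*A X_1\rangle_{L^2} = \sigma^2\,\text{trace}\bigl(A^*A\,\mathbb{E}[X_1\otimes X_1]\bigr) = \sigma^2\,\text{trace}(A^*AC)$. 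The key algebraic step is the cyclic identity
\[
\text{trace}(A^*AC) = \text{trace}\bigl(\left(\Lambda+\lambda I\right)^{-1}T^{\frac12}CT^{\frac12}\bigr) = \text{trace}\bigl(\Lambda\left(\Lambda+\lambda I\right)^{-1}\bigr) = \mathcal{N}(\lambda),
\]
using $\Lambda = T^{\frac12}CT^{\frac12}$. Hence $\mathbb{E}\bigl\|\tfrac1n\sum_i\zeta_i\bigr\|_{L^2}^2 = \sigma^2\mathcal{N}(\lambda)/n$.

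Finally, Markov's inequality applied to the nonnegative random variable $\bigl\|\tfrac1n\sum_i\zeta_i\bigr\|_{L^2}^2$ gives, for any $\delta>0$, that this quantity does not exceed $\sigma^2\mathcal{N}(\lambda)/(n\delta)$ with probability at least $1-\delta$; taking square roots yields exactly the stated bound. The argument is thus entirely a variance bound followed by Chebyshev, so the main obstacle is not probabilistic but a matter of careful bookkeeping: justifying the trace manipulations (finiteness requires $\Lambda$ to be trace class, equivalently $\mathcal{N}(\lambda)<\infty$, which holds once $\mathbb{E}\|X\|^2<\infty$), the interchange of expectation with the trace, and the validity of cyclicity for the trace-class products involved. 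It is worth noting that the fourth-moment Assumption~$3$ plays no role here; only $\sigma^2<\infty$ and a finite second moment of $X$ are needed.
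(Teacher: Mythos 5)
Your proposal is correct and follows essentially the same route as the paper's own proof: both center the vector $\left(\Lambda+\lambda I\right)^{-\frac{1}{2}}T^{\frac{1}{2}}\left(Y_iX_i-\left(X_i\otimes X_i\right)\beta^*\right)$ (the paper verifies mean zero via $C\beta^*=\mathbb{E}\left[YX\right]$, you via the model equation $Y_i=\langle X_i,\beta^*\rangle_{L^2}+\epsilon_i$, which is the same fact), reduce the second moment to $\sigma^2\,\mathrm{trace}\left(\left(\Lambda+\lambda I\right)^{-1}\Lambda\right)=\sigma^2\mathcal{N}\left(\lambda\right)$ by the identical trace computation, and conclude by Markov's inequality. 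Your closing remarks on trace-class justification and the non-use of Assumption~3 are accurate but supplementary; no gap to report.
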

\begin{proof}
Define $Z_{i}:= \left(\Lambda+\lambda I\right)^{-\frac{1}{2}}T^{\frac{1}{2}}\left[Y_{i}X_{i}-\left(X_{i}\otimes X_{i}\right)\beta^*\right]$. Since the slope function $\beta^*$ satisfies the operator equation $C\beta^*= \mathbb{E}\left[YX\right]$, we get that the mean of random variable $Z_{i}$ is zero, i.e.,
$$\mathbb{E}\left[Z_{i}\right]=\left(\Lambda+\lambda I\right)^{-\frac{1}{2}}T^{\frac{1}{2}}\left[\mathbb{E}\left[YX\right]-C\beta^*\right] =0.$$
By Markov's inequality, for any $t>0$
$$ \mathbb{P}\left(\left\|\frac{1}{n}\sum_{i=1}^{n}Z_{i}\right\|_{L^2} \geq t\right)\leq \frac{\mathbb{E}\left\|\frac{1}{n}\sum_{i=1}^{n}Z_{i}\right\|_{L^2}^{2}}{t^2}.$$
Note that 
$$\mathbb{E}\left\|\frac{1}{n}\sum_{i=1}^{n}Z_{i}\right\|_{L^2}^{2} = \frac{1}{n^2}\sum_{i,j=1}^{n}\mathbb{E}\left\langle Z_{i},Z_{j}\right\rangle_{L^2}= \frac{1}{n^2}\sum_{i\neq j}^{n}\mathbb{E}\left\langle Z_{i},Z_{j}\right\rangle_{L^2}+ \frac{1}{n^2}\sum_{i=1}^{n}\mathbb{E}\left\|Z_{i}\right\|_{L^2}^{2}= \frac{\mathbb{E}\left\|Z_{1}\right\|_{L^2}^{2}}{n} $$
and by taking $t = \sqrt{\frac{\mathbb{E}\left\|Z_{1}\right\|_{L^2}^{2}}{n\delta}}$, with at least probability $1-\delta$, we have 
\begin{equation}
\label{chebyshevbound}
\left\|\left(\Lambda+\lambda I\right)^{-\frac{1}{2}}T^{\frac{1}{2}}\left(\hat{R}-\hat{C}_{n}\beta^*\right)\right\|_{L^2} \leq \sqrt{\frac{\mathbb{E}\left[\left\|\left(\Lambda+\lambda I\right)^{-\frac{1}{2}}T^{\frac{1}{2}}\left(YX-\left(X \otimes X\right)\beta^*\right)\right\|_{L^2}^{2}\right]}{n\delta}}.
\end{equation}

Consider
\begin{equation*}
    \begin{split}
        & \mathbb{E}\left[\left\|\left(\Lambda+\lambda I\right)^{-\frac{1}{2}}T^{\frac{1}{2}}\left(YX-\left(X \otimes X\right)\beta^*\right)\right\|_{L^2}^{2}\right]\\
        = & \mathbb{E}\left[\left\|\left(\Lambda+\lambda I\right)^{-\frac{1}{2}}T^{\frac{1}{2}}\left(Y-\left\langle X, \beta^* \right\rangle_{L^2}\right)X\right\|_{L^2}^{2}\right]\\
        = & \mathbb{E}\left[\left(Y-\left\langle X, \beta^* \right\rangle_{L^2}\right)^2\left\|\left(\Lambda+\lambda I\right)^{-\frac{1}{2}}T^{\frac{1}{2}}X\right\|_{L^2}^{2}\right]\\
        = & \mathbb{E}\left[\epsilon^{2} \left\langle \left(\Lambda+\lambda I\right)^{-\frac{1}{2}}T^{\frac{1}{2}}X, \left(\Lambda+\lambda I\right)^{-\frac{1}{2}}T^{\frac{1}{2}}X\right\rangle_{L^2}\right]\\
        = & \mathbb{E}\left[\epsilon^{2}\text{trace}\left(\left(\Lambda+\lambda I\right)^{-1}T^{\frac{1}{2}}\left(X \otimes X\right)T^{\frac{1}{2}}\right)\right] \\
        = & \mathbb{E}\left[\epsilon^{2} \right] \text{trace}\left(\left(\Lambda+\lambda I\right)^{-1}T^{\frac{1}{2}}CT^{\frac{1}{2}}\right) \\
        = & \sigma^2 \text{trace}\left(\left(\Lambda+\lambda I\right)^{-1}\Lambda\right) = \sigma^2 \mathcal{N}\left(\lambda\right).
    \end{split}
\end{equation*}
With this bound and $\left(\ref{chebyshevbound}\right)$, the result follows.
\end{proof}
\noindent
\begin{lemma}
\label{HSBound}
    Assuming $\mathbb{E}\left\|X\right\|^4 < \infty$, with at least probability $1-\delta$, we get
    \begin{equation*}
        \left\|\hat{C}_{n}-C\right\|_{HS} \leq \sqrt{\frac{\mathbb{E}\left\|X\right\|^4}{n \delta}} := \Delta.
    \end{equation*}
\begin{proof}
    From Chebyshev's inequality, we have that
    \begin{equation*}
        \mathbb{P}\left(\left\|\hat{C}_{n}-C\right\|_{HS} > \xi\right) \leq \frac{\mathbb{E}\left\|\hat{C}_{n}-C\right\|_{HS}^2}{\xi^2}.
    \end{equation*}
Using Theorem 2.5 \cite{hovarthkokoszka2012}, we get
\begin{equation*}
        \mathbb{P}\left(\left\|\hat{C}_{n}-C\right\|_{HS} > \xi\right) \leq \frac{\mathbb{E}\left\|X\right\|^4}{n \xi^2}.
    \end{equation*}
Taking $\xi = \sqrt{\frac{\mathbb{E}\left\|X\right\|^4}{n \delta}}$ will conclude the result.
\end{proof}
\end{lemma}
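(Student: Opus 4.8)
The plan is to control the deviation $\|\hat{C}_n - C\|_{HS}$ through a second-moment (Chebyshev-type) argument, exploiting the fact that $\hat{C}_n$ is an average of i.i.d.\ rank-one operators whose common mean is exactly $C$. First I would apply Chebyshev's inequality to the nonnegative random variable $\|\hat{C}_n - C\|_{HS}$, which reduces the entire problem to estimating the single deterministic quantity $\mathbb{E}\|\hat{C}_n - C\|_{HS}^2$.

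To evaluate this second moment, I would write $\hat{C}_n - C = \frac{1}{n}\sum_{i=1}^n (X_i \otimes X_i - C)$ as an average of the i.i.d.\ centered random operators $W_i := X_i \otimes X_i - C$, each satisfying $\mathbb{E}[W_i] = 0$ in the Hilbert--Schmidt space of operators on $L^2(S)$. Expanding the squared norm and using independence to kill all cross terms, since $\mathbb{E}\langle W_i, W_j\rangle_{HS} = \langle \mathbb{E}W_i, \mathbb{E}W_j\rangle_{HS} = 0$ for $i \neq j$, I would obtain the exact identity $\mathbb{E}\|\hat{C}_n - C\|_{HS}^2 = \frac{1}{n}\,\mathbb{E}\|X\otimes X - C\|_{HS}^2$.

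The remaining step is to bound the per-sample variance. Since $\mathbb{E}[X\otimes X] = C$, I would use the elementary identity $\mathbb{E}\|X\otimes X - C\|_{HS}^2 = \mathbb{E}\|X\otimes X\|_{HS}^2 - \|C\|_{HS}^2 \leq \mathbb{E}\|X\otimes X\|_{HS}^2$, together with the rank-one norm computation $\|x\otimes x\|_{HS} = \|x\|^2$, to conclude $\mathbb{E}\|\hat{C}_n - C\|_{HS}^2 \leq \frac{\mathbb{E}\|X\|^4}{n}$; this is exactly the content one may invoke from Theorem~2.5 of \cite{hovarthkokoszka2012}. Substituting into Chebyshev's bound yields $\mathbb{P}(\|\hat{C}_n - C\|_{HS} > \xi) \leq \frac{\mathbb{E}\|X\|^4}{n\xi^2}$, and choosing $\xi = \sqrt{\mathbb{E}\|X\|^4/(n\delta)}$ forces the right-hand side to equal $\delta$, delivering the claimed high-probability bound.

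There is no serious obstacle here: the hypothesis $\mathbb{E}\|X\|^4 < \infty$ is precisely what guarantees finiteness of the second moment that drives the whole estimate. The only point demanding a little care is the vanishing of the cross terms, which relies on the independence of the $X_i$ and on the centering $\mathbb{E}[X\otimes X - C] = 0$; granting these, the $n^{-1/2}$ rate (with the $\delta^{-1/2}$ factor) is the sharp output one expects from a Chebyshev argument, and it is enough for the concentration estimates feeding the main theorem.
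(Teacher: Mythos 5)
Your proposal is correct and follows essentially the same route as the paper: Chebyshev's inequality applied to $\left\|\hat{C}_{n}-C\right\|_{HS}$, combined with the second-moment bound $\mathbb{E}\left\|\hat{C}_{n}-C\right\|_{HS}^{2}\leq \mathbb{E}\left\|X\right\|^{4}/n$ and the choice $\xi=\sqrt{\mathbb{E}\left\|X\right\|^{4}/(n\delta)}$. The only difference is that where the paper cites Theorem~2.5 of Horv\'ath--Kokoszka for the moment bound, you derive it directly (i.i.d.\ centering kills the cross terms, and $\left\|x\otimes x\right\|_{HS}=\left\|x\right\|^{2}$ handles the per-sample variance), which is a valid, self-contained substitute for the citation.
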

\noindent
The bound in the Hilbert-Schmidt is stronger than the operator norm. So the above lemma provides a stronger estimation compared to the estimation in terms of operator norm. The next lemma explains a technical bound involving operator $\Lambda$ and $\hat{\Lambda}_{n}$ which will be used repeatedly in our analysis.
\begin{lemma}
\label{fb}
For any $\nu > 0$, and measurable $\phi: \left[0, \kappa\right] \to \mathbb{R}$, it holds with probability greater than  $1-e^{\xi}$ that
\begin{equation*}
\left\|\phi\left(\hat{\Lambda}_{n}\right)\Lambda^{\nu}\right\|_{op} \lesssim  \sup_{t \in \left[0,\kappa\right]}t^{\nu}\phi\left(t\right) + \max\left\{\nu,1\right\} Z_{\nu}\left(\lambda\right)\sup_{t \in \left[0,\kappa\right]}\phi\left(t\right),
\end{equation*}
where
\begin{equation*}
        Z_{\nu}\left(\lambda\right) = 
        \begin{cases}
            \lambda^{\nu}, & \mbox{ if } \nu \leq 1\\
         \kappa^{\nu} \Delta, & \mbox{ if } \nu > 1 
        \end{cases}.
    \end{equation*}
\end{lemma}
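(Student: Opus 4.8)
The plan is to reduce the operator-norm bound to a pair of scalar (pointwise-in-spectrum) estimates via the functional calculus, handling the two regimes $0<\nu\le 1$ and $\nu>1$ by different tools, since the relevant operator inequalities behave differently across $\nu=1$. The randomness enters only through the concentration events already isolated in Lemma~\ref{Covariance estimation} (equivalently its consequence~(\ref{covariance estimation1})) and in Lemma~\ref{HSBound}; once I condition on these, the entire argument is deterministic and the asserted probability is simply the probability of the relevant event. I would state this conditioning at the outset and then treat the two cases.

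For $0<\nu\le 1$ I would \emph{not} try to estimate $\|\Lambda^\nu-\hat\Lambda_n^\nu\|_{op}$ directly (that route is too lossy and would force an unwanted $\|\Lambda-\hat\Lambda_n\|_{op}\lesssim\lambda$), and instead insert regularized factors:
\begin{equation*}
\phi(\hat\Lambda_n)\Lambda^\nu=\bigl[\phi(\hat\Lambda_n)(\hat\Lambda_n+\lambda I)^\nu\bigr]\bigl[(\hat\Lambda_n+\lambda I)^{-\nu}(\Lambda+\lambda I)^\nu\bigr]\bigl[(\Lambda+\lambda I)^{-\nu}\Lambda^\nu\bigr].
\end{equation*}
The middle factor has norm at most $2^\nu$ by~(\ref{covariance estimation1}) (which holds for exponents in $[0,1]$; one passes to the adjoint using self-adjointness), the last factor has norm $\sup_{x\ge 0}x^\nu(x+\lambda)^{-\nu}\le 1$, and the first is a function of $\hat\Lambda_n$ so its norm equals $\sup_{t\in[0,\kappa]}|\phi(t)|(t+\lambda)^\nu$. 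Subadditivity of $t\mapsto t^\nu$ for $\nu\le 1$ gives $(t+\lambda)^\nu\le t^\nu+\lambda^\nu$, and splitting the supremum yields $\sup_{t\in[0,\kappa]}t^\nu\phi(t)+\lambda^\nu\sup_{t\in[0,\kappa]}\phi(t)$, which is exactly the claim with $Z_\nu(\lambda)=\lambda^\nu$ and $\max\{\nu,1\}=1$ (the factor $2^\nu\le 2$ is absorbed in $\lesssim$).

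For $\nu>1$ neither the inequality~(\ref{covariance estimation1}) nor subadditivity applies, so I would split
\begin{equation*}
\phi(\hat\Lambda_n)\Lambda^\nu=\phi(\hat\Lambda_n)\hat\Lambda_n^\nu+\phi(\hat\Lambda_n)\bigl(\Lambda^\nu-\hat\Lambda_n^\nu\bigr),
\end{equation*}
bounding the first summand by $\sup_{t\in[0,\kappa]}t^\nu\phi(t)$ (functional calculus, using $\mathrm{spec}(\hat\Lambda_n)\subset[0,\kappa]$) and the second by $\sup_{t\in[0,\kappa]}|\phi(t)|\cdot\|\Lambda^\nu-\hat\Lambda_n^\nu\|_{op}$. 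The crucial estimate is $\|\Lambda^\nu-\hat\Lambda_n^\nu\|_{op}\le\|\Lambda^\nu-\hat\Lambda_n^\nu\|_{HS}\le\nu\kappa^{\nu-1}\|\Lambda-\hat\Lambda_n\|_{HS}$: since $\Lambda$ and $\hat\Lambda_n$ are compact and self-adjoint with spectra in $[0,\kappa]$, expanding $\|f(\Lambda)-f(\hat\Lambda_n)\|_{HS}^2$ in the two eigenbases reduces it to the divided differences $|f(a)-f(b)|/|a-b|$, which for $f(x)=x^\nu$ on $[0,\kappa]$ are bounded by the Lipschitz constant $\nu\kappa^{\nu-1}$. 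Finally $\Lambda-\hat\Lambda_n=T^{\frac12}(C-\hat C_n)T^{\frac12}$ gives $\|\Lambda-\hat\Lambda_n\|_{HS}\le\|T\|_{op}\|C-\hat C_n\|_{HS}\lesssim\Delta$ by Lemma~\ref{HSBound}, and the constants combine into $\max\{\nu,1\}Z_\nu(\lambda)=\nu\kappa^\nu\Delta$.

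The step I expect to be the main obstacle is the $\nu>1$ case, specifically a clean justification of the operator-Lipschitz bound $\|\Lambda^\nu-\hat\Lambda_n^\nu\|_{HS}\le\nu\kappa^{\nu-1}\|\Lambda-\hat\Lambda_n\|_{HS}$: one must exploit compactness to pass to discrete spectral decompositions and verify that the Hilbert--Schmidt norm is computed entrywise against the divided differences. This is precisely the point where switching from the operator norm (used for $\nu\le 1$) to the Hilbert--Schmidt norm is forced, which is also what makes the $\nu>1$ bound scale with $\Delta$ rather than with a power of $\lambda$.
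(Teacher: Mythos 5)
Your proposal is correct and follows essentially the same route as the paper: for $0<\nu\le 1$ the same three-factor regularized decomposition with the Cordes-inequality bound~(\ref{covariance estimation1}) and subadditivity of $t\mapsto t^{\nu}$, and for $\nu>1$ the same splitting $\phi(\hat\Lambda_n)\hat\Lambda_n^{\nu}+\phi(\hat\Lambda_n)(\Lambda^{\nu}-\hat\Lambda_n^{\nu})$ controlled via the Hilbert--Schmidt bound of Lemma~\ref{HSBound}. The only difference is cosmetic: you spell out the divided-difference justification that scalar Lipschitzness of $x^{\nu}$ on $[0,\kappa]$ yields $\|\Lambda^{\nu}-\hat\Lambda_n^{\nu}\|_{HS}\le\nu\kappa^{\nu-1}\|\Lambda-\hat\Lambda_n\|_{HS}$, a step the paper asserts in one line.
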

\begin{proof}
Proof of this result follows the similar steps of Lemma 5.3 \cite{BlanchardCGKCG2016}.
For $\nu \leq 1$, we have
\begin{equation*}
    \begin{split}
        \left\|\phi\left(\hat{\Lambda}_{n}\right)\Lambda^{\nu}\right\|_{op} & \leq \left\|\phi\left(\hat{\Lambda}_{n}\right)\left(\hat{\Lambda}_{n}+\lambda I\right)^{\nu}\right\|_{op}\left\|\left(\hat{\Lambda}_{n}+\lambda I\right)^{-\nu}\left(\Lambda+\lambda I\right)^{\nu}\right\|_{op}\left\|\left(\Lambda+\lambda I\right)^{-\nu}\Lambda^{\nu}\right\|_{op}\\
        & \lesssim \left(\sup_{t \in \left[0,\kappa\right]}t^{\nu}\phi\left(t\right)+\lambda^{\nu}\sup_{t \in \left[0,\kappa\right]}\phi\left(t\right)\right).
    \end{split}
\end{equation*} 
For last inequality, we used $\left(\ref{covariance estimation1}\right)$ and the fact that $\left\|\left(\Lambda+\lambda I\right)^{-\nu}\Lambda^{\nu}\right\|_{op} \leq 1$.  
For $\nu >1$,
\begin{equation*}
    \begin{split}
        \left\|\phi\left(\hat{\Lambda}_{n}\right)\Lambda^{\nu}\right\|_{op} & \leq \left\|\phi\left(\hat{\Lambda}_{n}\right)\right\|_{op}\left\|\left(\Lambda^{\nu}-\hat{\Lambda}_{n}^{\nu}\right)\right\|_{op}+\left\|\phi\left(\hat{\Lambda}_{n}\right)\hat{\Lambda}_{n}^{\nu}\right\|_{op} \\
        & \leq \left\|\left(\Lambda^{\nu}-\hat{\Lambda}_{n}^{\nu}\right)\right\|_{op} \sup_{t \in \left[0,\kappa\right]}\phi\left(t\right) + \sup_{t \in \left[0,\kappa\right]}t^{\nu}\phi\left(t\right) \\
        & \lesssim \left\|\Lambda-\hat{\Lambda}_{n}\right\|_{HS}\sup_{t \in \left[0,\kappa\right]}\phi\left(t\right) + \sup_{t \in \left[0,\kappa\right]}t^{\nu}\phi\left(t\right).
    \end{split}
\end{equation*}
Here we used that for $\nu >1$, $x^{\nu}$ is $\nu \kappa^{\nu-1}-$ Lipschitz over $\left[0,\kappa\right] $ and by Lemma \ref{HSBound} we get our result.
\end{proof}
%%%%%%%%%%%%%%%%%%%%%%%%%%%%%%%%%%%%%%%%%%%%%%%%%%%%%%%%%%
\noindent
In the following lemma, we will discuss the bound of the residual term that will be used later to bound $\left|p_{m}^{'}\left(0\right)\right|$.
\begin{lemma}
\label{Residual bound}
    Under Assumptions $\left(1\right)$--$\left(3\right)$, $\mathbb{E}\left\|X\right\|^4 < \infty$ and $\lambda \geq \left(\frac{4c_{1}^2}{n}\right)^{\frac{1}{s+1}}$, we have that
    \begin{equation*}
    \begin{split}
\left\|\hat{\Lambda}_{n}q_{m}\left(\hat{\Lambda}_{n}\right)T^{\frac{1}{2}}\hat{R}-T^{\frac{1}{2}}\hat{R}\right\|_{L^2} 
        \leq &  \sqrt{\frac{2 \sigma^2 \mathcal{N}\left(\lambda\right)}{n \delta}}\left(\left|p_{m}^{'}\left(0\right)\right|^{-\frac{1}{2}}+\lambda^{\frac{1}{2}}\right)\\
        & + 2 \left(\left|p_{m}^{'}\left(0\right)\right|^{-\left(\alpha +1\right)}+c\left(\alpha \right)Z_{\alpha}\left(\lambda\right)\left|p_{m}^{'}\left(0\right)\right|^{-1}\right)\left\|g\right\|_{L^2}.
    \end{split}
\end{equation*}
\end{lemma}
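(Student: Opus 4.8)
The plan is to observe first that the residual norm is exactly $[p_m,p_m]_{(0)}^{1/2}=\bigl\|p_m(\hat{\Lambda}_{n})T^{\frac12}\hat{R}\bigr\|_{L^2}$: since $p_m(x)=1-xq_m(x)$ we have $\hat{\Lambda}_{n}q_m(\hat{\Lambda}_{n})-I=-p_m(\hat{\Lambda}_{n})$, so the quantity to be bounded equals $\|p_m(\hat{\Lambda}_{n})T^{\frac12}\hat{R}\|_{L^2}$. I would then apply Lemma~\ref{polynomial lemma}$(3)$ to replace the oscillating polynomial $p_m$ by its envelope, obtaining $\|p_m(\hat{\Lambda}_{n})T^{\frac12}\hat{R}\|_{L^2}\le\|F_{x_{1,m}}\phi_m(\hat{\Lambda}_{n})T^{\frac12}\hat{R}\|_{L^2}$, which concentrates the spectral mass on $[0,x_{1,m})$, precisely where the pointwise bound $\left(\ref{xphi bound}\right)$ on $x^{\nu}\phi_m^2(x)$ is available.

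The second step is to split the data vector along the source condition. Writing $\beta^*=T^{\frac12}\Lambda^{\alpha}g$ and using $T^{\frac12}\hat{C}_{n}T^{\frac12}=\hat{\Lambda}_{n}$, I decompose $T^{\frac12}\hat{R}=T^{\frac12}(\hat{R}-\hat{C}_{n}\beta^*)+\hat{\Lambda}_{n}\Lambda^{\alpha}g$ and estimate the two contributions of $F_{x_{1,m}}\phi_m(\hat{\Lambda}_{n})$ separately. For the noise contribution I insert $(\hat{\Lambda}_{n}+\lambda I)^{\pm\frac12}$, control $\|(\hat{\Lambda}_{n}+\lambda I)^{-\frac12}(\Lambda+\lambda I)^{\frac12}\|_{op}\le\sqrt2$ by $\left(\ref{covariance estimation1}\right)$, bound $\|(\Lambda+\lambda I)^{-\frac12}T^{\frac12}(\hat{R}-\hat{C}_{n}\beta^*)\|_{L^2}$ via Lemma~\ref{Empirical bound}, and estimate the remaining operator norm by $\sup_{x\in[0,x_{1,m})}\phi_m(x)(x+\lambda)^{\frac12}\le|p_m'(0)|^{-\frac12}+\lambda^{\frac12}$, the two summands coming from $\left(\ref{xphi bound}\right)$ at $\nu=1$ and $\nu=0$ respectively. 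This reproduces exactly the first term $\sqrt{2\sigma^2\mathcal N(\lambda)/(n\delta)}\,(|p_m'(0)|^{-\frac12}+\lambda^{\frac12})$ of the claim.

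For the signal contribution I would write $F_{x_{1,m}}\phi_m(\hat{\Lambda}_{n})\hat{\Lambda}_{n}\Lambda^{\alpha}g=\psi(\hat{\Lambda}_{n})\Lambda^{\alpha}g$ with the admissible symbol $\psi(x)=x\phi_m(x)\mathbf{1}_{[0,x_{1,m})}(x)$, and invoke Lemma~\ref{fb} with $\nu=\alpha$. It then remains to control $\sup_t t^{\alpha}\psi(t)=\sup_{t<x_{1,m}}t^{\alpha+1}\phi_m(t)$ and $\sup_t\psi(t)=\sup_{t<x_{1,m}}t\,\phi_m(t)$; applying $\left(\ref{xphi bound}\right)$ at $\nu=2(\alpha+1)$ and at $\nu=2$ gives the estimates $\lesssim|p_m'(0)|^{-(\alpha+1)}$ and $\lesssim|p_m'(0)|^{-1}$, which together with the factor $Z_{\alpha}(\lambda)$ furnished by Lemma~\ref{fb} yield the second term. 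Summing the two bounds gives the statement. I expect the main obstacle to be the bookkeeping that matches the powers of $|p_m'(0)|$: each exponent must be extracted from $\left(\ref{xphi bound}\right)$ at precisely the right $\nu$, and one must also justify that truncating by $F_{x_{1,m}}$ before applying the functional calculus of Lemma~\ref{fb} is legitimate, since $\phi_m$ is only defined on $[0,x_{1,m})$.
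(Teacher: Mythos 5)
Your proposal is correct and follows essentially the same route as the paper's own proof: the identity $\hat{\Lambda}_{n}q_{m}(\hat{\Lambda}_{n})-I=-p_{m}(\hat{\Lambda}_{n})$, the envelope bound of Lemma \ref{polynomial lemma}(3), the source-condition split $T^{\frac{1}{2}}\hat{R}=T^{\frac{1}{2}}(\hat{R}-\hat{C}_{n}\beta^*)+\hat{\Lambda}_{n}\Lambda^{\alpha}g$, and the same treatment of the two pieces via Lemma \ref{Empirical bound} together with $(\ref{covariance estimation1})$ for the noise term and Lemma \ref{fb} for the signal term, with $(\ref{xphi bound})$ supplying the powers of $\left|p_{m}^{'}\left(0\right)\right|$. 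Your bookkeeping of the exponents (squaring before invoking $(\ref{xphi bound})$, i.e.\ using $\nu=1,0$ and $\nu=2(\alpha+1),2$) is in fact slightly more explicit than the paper's.
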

\begin{proof}
For bounding the residual term, we use Lemma \ref{polynomial lemma} at the initial stage to conclude that
    \begin{equation*}
    \begin{split}
       & \left\|\hat{\Lambda}_{n}q_{m}\left(\hat{\Lambda}_{n}\right)T^{\frac{1}{2}}\hat{R}-T^{\frac{1}{2}}\hat{R}\right\|_{L^2}\\
       = & \left\|p_{m}\left(\hat{\Lambda}_{n}\right)T^{\frac{1}{2}}\hat{R}\right\|_{L^2} \\
       \leq & \left\|F_{x_{1,m}}\left(\phi_{m}\left(\hat{\Lambda}_{n}\right)T^{\frac{1}{2}}\hat{R}\right)\right\|_{L^2} \\
       = & \left\|F_{x_{1,m}}\left(\phi_{m}\left(\hat{\Lambda}_{n}\right)\left(T^{\frac{1}{2}}\hat{R}-T^{\frac{1}{2}}\hat{C}_{n}\beta^*+T^{\frac{1}{2}}\hat{C}_{n}\beta^*\right)\right)\right\|_{L^2} \\
       \leq & \underbrace{\left\|F_{x_{1,m}}\left(\phi_{m}\left(\hat{\Lambda}_{n}\right)T^{\frac{1}{2}}\left(\hat{R}-\hat{C}_{n}\beta^*\right)\right)\right\|_{L^2}}_{Term-A} + \underbrace{\left\|F_{x_{1,m}}\left(\phi_{m}\left(\hat{\Lambda}_{n}\right)T^{\frac{1}{2}}\hat{C}_{n}\beta^*\right)\right\|_{L^2}}_{Term-B}.
    \end{split}
\end{equation*}
 \noindent
 We will take help from Lemma \ref{Covariance estimation} and Lemma \ref{Empirical bound} for the estimation of both terms.\\

\vspace{0.07cm}

\textit{Estimation of Term-A: }
\begin{equation}
\label{terma}
    \begin{split}
        & \left\|F_{x_{1,m}}\left(\phi_{m}\left(\hat{\Lambda}_{n}\right)T^{\frac{1}{2}}\left(\hat{R}-\hat{C}_{n}\beta^*\right)\right)\right\|_{L^2} \\
        \leq & \left\|F_{x_{1,m}}\phi_{m}\left(\hat{\Lambda}_{n}\right)\left(\hat{\Lambda}_{n}+\lambda I\right)^{\frac{1}{2}}\right\|_{op}\\
        &\qquad\qquad\times\left\|\left(\hat{\Lambda}_{n}+\lambda I\right)^{-\frac{1}{2}}\left(\Lambda+\lambda I\right)^{\frac{1}{2}}\right\|_{op}\left\|\left(\Lambda+\lambda I\right)^{-\frac{1}{2}}\left(T^{\frac{1}{2}}\hat{R}-T^{\frac{1}{2}}\hat{C}_{n}\beta^*\right)\right\|_{L^2}\\
        \leq & \sqrt{\frac{2 \sigma^2 \mathcal{N}\left(\lambda\right)}{n \delta}}\left\|F_{x_{1,m}}\phi_{m}\left(\hat{\Lambda}_{n}\right)\left(\hat{\Lambda}_{n}+\lambda I\right)^{\frac{1}{2}}\right\|_{op}\\
        \leq &  \sqrt{\frac{2 \sigma^2 \mathcal{N}\left(\lambda\right)}{n \delta}}\left(\sup_{x \in \left[0,x_{1,m}\right]}x^{\frac{1}{2}}\phi_{m}\left(x\right) + \lambda^{\frac{1}{2}}\sup_{x \in \left[0,x_{1,m}\right]}\phi_{m}\left(x\right)\right)\\
        \leq &  \sqrt{\frac{2 \sigma^2 \mathcal{N}\left(\lambda\right)}{n \delta}}\left(\left|p_{m}^{'}\left(0\right)\right|^{-\frac{1}{2}}+\lambda^{\frac{1}{2}}\right).
    \end{split}
\end{equation}
For the last inequality, we use $\left(\ref{xphi bound}\right)$ for $\nu= 0,1$.\\
\noindent
\vspace{0.07cm}

\textit{Estimation of Term-B:} 
\begin{equation}
\label{termb}
    \begin{split}
        & \left\|F_{x_{1,m}}\left(\phi_{m}\left(\hat{\Lambda}_{n}\right)T^{\frac{1}{2}}\hat{C}_{n}\beta^*\right)\right\|_{L^2} \\
        = & \left\|F_{x_{1,m}}\phi_{m}\left(\hat{\Lambda}_{n}\right)\hat{\Lambda}_{n}\Lambda^{\alpha}\right\|_{op}\left\|g\right\|_{L^2} \\
        \leq & 2\left(\sup_{t \in \left[0,x_{1,m}\right]}t^{\alpha+1}\phi_{m}\left(t\right)+ c\left(\alpha \right)Z_{\alpha}\left(\lambda\right) \sup_{t \in \left[0,x_{1,m}\right]}t\phi_{m}\left(t\right)\right)\left\|g\right\|_{L^2}\\
        \leq & 2 \left(\left|p_{m}^{'}\left(0\right)\right|^{-\left(\alpha +1\right)}+c\left(\alpha \right)Z_{\alpha}\left(\lambda\right)\left|p_{m}^{'}\left(0\right)\right|^{-1}\right)\left\|g\right\|_{L^2}.
    \end{split}
\end{equation}
where we used Lemma \ref{fb} and $\left(\ref{xphi bound}\right)$ in the last inequality. From $\left(\ref{terma}\right)$ and $\left(\ref{termb}\right)$, we obtain the result.
\end{proof}
\noindent

\section*{Acknowledgments}
S. Sivananthan acknowledges the Science and Engineering Research Board, Government of India, for the financial support through project no. MTR/2022/000383. BKS is partially supported by the National Science Foundation DMS CAREER Award 1945396.
\bibliographystyle{acm}
\bibliography{main}

\begin{thebibliography}{10}

\bibitem{aronszajn1950rkhs}
{\sc Aronszajn, N.}
\newblock Theory of reproducing kernels.
\newblock {\em Trans. Amer. Math. Soc. 68\/} (1950), 337--404.

\bibitem{balasubramanian2022unified}
{\sc Balasubramanian, K., M{\"u}ller, H.-G., and Sriperumbudur, B.~K.}
\newblock Unified {RKHS} methodology and analysis for functional linear and
  single-index models.
\newblock {\em arXiv preprint arXiv:2206.03975\/} (2022).

\bibitem{sergeionregularization}
{\sc Bauer, F., Pereverzev, S., and Rosasco, L.}
\newblock On regularization algorithms in learning theory.
\newblock {\em J. Complexity 23}, 1 (2007), 52--72.

\bibitem{BlanchardCGKCG2016}
{\sc Blanchard, G., and Kr\"{a}mer, N.}
\newblock Convergence rates of kernel conjugate gradient for random design
  regression.
\newblock {\em Anal. Appl. (Singap.) 14}, 6 (2016), 763--794.

\bibitem{cai2006prediction}
{\sc Cai, T.~T., and Hall, P.}
\newblock Prediction in functional linear regression.
\newblock {\em Ann. Statist. 34}, 5 (2006), 2159--2179.

\bibitem{tonyyuan2012minimax}
{\sc Cai, T.~T., and Yuan, M.}
\newblock Minimax and adaptive prediction for functional linear regression.
\newblock {\em J. Amer. Statist. Assoc. 107}, 499 (2012), 1201--1216.

\bibitem{cardot2003spline}
{\sc Cardot, H., Ferraty, F., and Sarda, P.}
\newblock Spline estimators for the functional linear model.
\newblock {\em Statist. Sinica 13}, 3 (2003), 571--591.

\bibitem{onlineGD2022}
{\sc Chen, X., Tang, B., Fan, J., and Guo, X.}
\newblock Online gradient descent algorithms for functional data learning.
\newblock {\em J. Complexity 70\/} (2022), Paper No. 101635, 14.

\bibitem{smale2002foundation}
{\sc Cucker, F., and Smale, S.}
\newblock On the mathematical foundations of learning.
\newblock {\em Bull. Amer. Math. Soc. (N.S.) 39}, 1 (2002), 1--49.

\bibitem{cuckerzhou2007learningtheory}
{\sc Cucker, F., and Zhou, D.-X.}
\newblock {\em Learning theory: an approximation theory viewpoint}, vol.~24.
\newblock Cambridge University Press, Cambridge, 2007.

\bibitem{ferraty2006nonparametric}
{\sc Ferraty, F., and Vieu, P.}
\newblock {\em Nonparametric Functional Data Analysis}.
\newblock Springer Series in Statistics. Springer, New York, 2006.
\newblock Theory and practice.

\bibitem{forni1998let}
{\sc Forni, M., and Reichlin, L.}
\newblock Let's get real: A factor analytical approach to disaggregated
  business cycle dynamics.
\newblock {\em The Review of Economic Studies 65}, 3 (1998), 453--473.

\bibitem{guo2023capacity}
{\sc Guo, X., Guo, Z.-C., and Shi, L.}
\newblock Capacity dependent analysis for functional online learning
  algorithms.
\newblock {\em Appl. Comput. Harmon. Anal. 67\/} (2023), Paper No. 101567, 30.

\bibitem{hall2007methodology}
{\sc Hall, P., and Horowitz, J.~L.}
\newblock Methodology and convergence rates for functional linear regression.
\newblock {\em Ann. Statist. 35}, 1 (2007), 70--91.

\bibitem{Hanke1995CGtypemethod}
{\sc Hanke, M.}
\newblock {\em Conjugate Gradient Type Methods for Ill-Posed Problems},
  vol.~327 of {\em Pitman Research Notes in Mathematics Series}.
\newblock Longman Scientific \& Technical, Harlow, 1995.

\bibitem{hovarthkokoszka2012}
{\sc Horv\'{a}th, L., and Kokoszka, P.}
\newblock {\em Inference for functional data with applications}.
\newblock Springer Series in Statistics. Springer, New York, 2012.

\bibitem{li2007rates}
{\sc Li, Y., and Hsing, T.}
\newblock On rates of convergence in functional linear regression.
\newblock {\em J. Multivariate Anal. 98}, 9 (2007), 1782--1804.

\bibitem{KCGMwithrandomprojection}
{\sc Lin, J., and Cevher, V.}
\newblock Kernel conjugate gradient methods with random projections.
\newblock {\em Appl. Comput. Harmon. Anal. 55\/} (2021), 223--269.

\bibitem{muller2005generalized}
{\sc M\"{u}ller, H.-G., and Stadtm\"{u}ller, U.}
\newblock Generalized functional linear models.
\newblock {\em Ann. Statist. 33}, 2 (2005), 774--805.

\bibitem{AI2022sergei}
{\sc Pereverzyev, S.}
\newblock {\em An introduction to artificial intelligence based on reproducing
  kernel Hilbert spaces}.
\newblock Compact Textbooks in Mathematics. Birkh\"{a}user, 2022.

\bibitem{preda2005clusterwise}
{\sc Preda, C., and Saporta, G.}
\newblock Clusterwise {PLS} regression on a stochastic process.
\newblock {\em Comput. Statist. Data Anal. 49}, 1 (2005), 99--108.

\bibitem{ramsay1991some}
{\sc Ramsay, J.~O., and Dalzell, C.~J.}
\newblock Some tools for functional data analysis.
\newblock {\em J. Roy. Statist. Soc. Ser. B 53}, 3 (1991), 539--572.

\bibitem{ramsay2005FDA}
{\sc Ramsay, J.~O., and Silverman, B.~W.}
\newblock {\em Functional Data Analysis}, second~ed.
\newblock Springer Series in Statistics. Springer, New York, 2005.

\bibitem{TONGHUBER}
{\sc Tong, H.}
\newblock Functional linear regression with {H}uber loss.
\newblock {\em J. Complexity 74\/} (2023), Paper No. 101696, 14.

\bibitem{HTONGFLR}
{\sc Tong, H., and Ng, M.}
\newblock Analysis of regularized least squares for functional linear
  regression model.
\newblock {\em J. Complexity 49\/} (2018), 85--94.

\bibitem{ARKHSFORFLR}
{\sc Yuan, M., and Cai, T.~T.}
\newblock A reproducing kernel {H}ilbert space approach to functional linear
  regression.
\newblock {\em Ann. Statist. 38}, 6 (2010), 3412--3444.

\bibitem{ZhangFaster2020}
{\sc Zhang, F., Zhang, W., Li, R., and Lian, H.}
\newblock Faster convergence rate for functional linear regression in
  reproducing kernel {H}ilbert spaces.
\newblock {\em Statistics 54}, 1 (2020), 167--181.

\end{thebibliography}
\end{document}